\theoremstyle{plain}
\newtheorem{theorem}{Theorem}[section]
\newtheorem{definition/theorem}[theorem]{Definition/Theorem}
\newtheorem{corollary}[theorem]{Corollary}
\newtheorem{lemma}[theorem]{Lemma}
\newtheorem{proposition}[theorem]{Proposition}
\theoremstyle{definition}
\newtheorem{definition}[theorem]{Definition}
\newtheorem*{convention}{Convention}
\newtheorem{example}[theorem]{Example}
\newtheorem{remark}[theorem]{Remark}
\newtheorem{Definitions and Notation}[theorem]{Definitions and
Notation}
\numberwithin{equation}{section}
\newcommand{\f}[1]{\ensuremath{\mathfrak{#1}}}
\newcommand{\lra}{\longrightarrow}
\newcommand{\sse}{\subseteq}
\newcommand{\ssne}{\subsetneq}
\newcommand{\ol}[1]{\ensuremath{\overline{#1}}}
\newcommand{\depth}{\operatorname{depth}} \newcommand{\Dim}{\operatorname{dim}}
 \newcommand{\Edim}{\operatorname{edim}}
\newcommand{\card}[1]{\ensuremath{\left|#1\right|}}
\newcommand{\ZZ}{\ensuremath{\mathbb{Z}}}
\newcommand{\QQ}{\ensuremath{\mathbb{Q}}}
\newcommand{\zf}[1]{\ensuremath{\operatorname{\mathcal{Z}}\left(#1\right)}}
\begin{document}

\title{Polynomials Inducing the Zero Function on Local Rings}


\author{Mark W. Rogers}
\address{Department of Mathematics, Missouri State University,
Springfield, MO 65897, USA}
\email{markrogers@missouristate.edu}

\author{Cameron Wickham}
\address{Department of Mathematics, Missouri State University,
Springfield, MO 65897, USA}
\email{cwickham@missouristate.edu}


\subjclass[2010]{Primary 13M10, 13B25; Secondary 11C08, 13J15, 13E10, 13F20}

\keywords{finite ring, polynomial functions, vanishing polynomials, Artinian
  rings}

\date{\today}

\begin{abstract}
  For a Noetherian local ring $(R, \f{m})$ having a finite residue field of
  cardinality $q$, we study the connections between the ideal \zf{R} of $R[x]$,
  which is the set of polynomials that vanish on $R$, and the ideal \zf{\f{m}},
  the polynomials that vanish on \f{m}, using what we call
  \emph{$\pi$-polynomials}:  polynomials of the form
  $\pi(x) = \prod_{i = 1}^{q} (x - c_{i})$, where $c_{1}, \ldots, c_{q}$ is a
  set of representatives of the residue classes of \f{m}.  When $R$ is Henselian
  we prove that $\pi(R) = \f{m}$ and show that a generating set for \zf{R} may
  be obtained from a generating set for \zf{\f{m}} by composing with $\pi(x)$.
  When \f{m} is principal and has index of nilpotency $e$, we prove that if
  $e \leq q$ then $\zf{\f{m}} = (x, \f{m})^{e}$, and if $e = q + 1$ then
  $\zf{\f{m}} = (x, \f{m})^{e} + (x^{q} - m^{q - 1}x)$.  When $R$ is finite, we
  prove that $\zf{R} = \bigcap_{i = 1}^{q} \zf{c_{i} + \f{m}}$ is a minimal
  primary decomposition.  We determine when \zf{R} is nonzero, regular, or
  principal, respectively, and do the same for \zf{\f{m}}.  We prove that when
  $R$ is complete, repeated application of $\pi(x) + x$ to elements of $R$ will
  produce a sequence converging to the roots of $\pi(x)$.  We show that \zf{R}
  is the intersection of the principal ideals generated by the
  $\pi$-polynomials.
\end{abstract}

\maketitle

\section{Introduction}

One of the surprising facts about finite rings is that a polynomial can be
nonzero and yet induce the zero function.  An interesting first example is
provided by Fermat's Little Theorem:  If $p$ is prime, then the nonzero
polynomial $x^{p} - x$ induces the zero function on $\ZZ_{p}$.  Using this we
can build obvious examples, such as $p(x^{p} - x)$ and $(x^{p} - x)^{2}$ on
$\ZZ_{p^{2}}$, and more surprising examples, such as $(x^{p} - x)^{p} - p^{p -
  1}(x^{p} - x)$ on $\ZZ_{p^{p + 1}}$; see Corollary~\ref{C:Dickson} for more
information on these examples.  For a ring $R$, it's easy to see that the set of
polynomials in $R[x]$ that induce the zero function on $R$ is an ideal of
$R[x]$; we call this the \emph{zero-function ideal} of $R$, denoted $\zf{R}$.
The zero-function ideal has been studied, often with particular focus on the
rings $R = \ZZ_{p^{n}}$, for its connection with integer-valued polynomials and
functions induced by polynomials \cite{F, J, P, Z} and coding theory \cite{LRS}.

For most of our paper, $(R, \f{m})$ is a Noetherian local ring; we will see that
\zf{R} is only nonzero when the residue field $\ol{R} = R/\f{m}$ is finite, so
we focus most of our attention on this case and let $q = \left|\ol{R}\right|$.
Many authors have studied the problem of finding a generating set for \zf{R},
most often in the case $R = \ZZ_{p^{n}}$ \cite{D, B, F, L, NW, P, W}.  In this
paper, we argue that in many cases, focus should be shifted from \zf{R} to the
simpler ideal \zf{\f{m}}, which is the set of polynomials that induce the zero
function on \f{m}.  The connection between the two is the ideal \zf{R, \f{m}},
the set of polynomials that take elements of $R$ into \f{m}; it's easy to show
that $\zf{R, \f{m}} = (x^{q} - x, \f{m})$. Certainly polynomials in \zf{R,
  \f{m}} can be composed with those in \zf{\f{m}} to obtain polynomials in
\zf{R}; visually, $\zf{\f{m}} \circ \zf{R, \f{m}} \sse \zf{R}$.  One of the main
themes of this paper is to show that in some sense the opposite is true:  A
generating set for \zf{R} can be obtained by composing generators of \zf{R,
  \f{m}} with generators of \zf{\f{m}} (see Theorem~\ref{T:generators}).

\begin{example}\label{E:Z9}
  Let $R = \ZZ_{9}$, so that $\f{m} = (3)$ and $\ol{R} \cong \ZZ_{3}$.  By
  direct computation or by Theorem~\ref{T:Dickson}, $\zf{\f{m}} = (x,
  \f{m})^{2}$.  In Lemma~\ref{L:Stuff} we easily find that with $\pi(x) = x^{3}
  - x$, $\zf{R, \f{m}} = (\pi(x), \f{m}) = (x^{3} - x, 3)$; from this, according
  to Theorem~\ref{T:generators}, we deduce
  \[
  \zf{R} = \zf{\f{m}} \circ \zf{R, \f{m}} = (x^{3} - x, 3)^{2}.
  \]
  This makes it clear that the simpler ideal $\zf{\f{m}}$ controls the structure
  of the generating set of the more complicated ideal $\zf{R}$.
\end{example}

The polynomial $x^{q} - x$ has played an important role in the research on
zero-functions, due primarily to the fact that the image of $x^{q} - x$
generates \zf{\ol{R}} and, to a lesser extent, the fact that when $R$ is finite,
$x^{q} - x$ maps $R$ surjectively onto \f{m}.  See A. Bandini's paper \cite{B}
for applications of surjectivity; we generalize the surjectivity result in
Corollary~\ref{C:Onto} and apply it in Theorem~\ref{T:LikeGilmer} and
Proposition~\ref{P:composition}.  A secondary theme of this paper is that there
is actually a class of polynomials with these properties that can play the role
of $x^{q} - x$; we call these \emph{$\pi$-polynomials}, defined to be
polynomials of the form $\pi(x) = \prod_{i = 1}^{q}(x - c_{i})$ where $c_{1},
\ldots, c_{q}$ is any set of representatives of the residue classes of \f{m}.
As we will see, $x^{q} - x$ is a $\pi$-polynomial when $R$ is Henselian (which
holds true in the common case where $R$ is finite).  If $R$ is complete, we also
provide a computational way of obtaining the factorization of any
$\pi$-polynomial, such as $x^{q} - x$ itself.  In the case of $x^{q} - x$, this
method is as simple as choosing any element of $R$ and repeatedly taking the
$q$th power; the results converge to a root of $x^{q} - x$.  (See
Theorems~\ref{T:EquivalenceOfPi}, \ref{T:Limit}.)

The ideal \zf{R} for $R = \ZZ_{p^{n}}$ was studied as early as 1929 by
L. E. Dickson \cite[Theorem 27]{D}; in that work, the polynomials in \zf{R} were
referred to as \emph{residual polynomials}.  Dickson found a generating set for
\zf{\ZZ_{p^{n}}} when $n \leq p$.  In our notation, he found $\zf{R} = (\pi(x),
\f{m})^{n}$, where $\pi(x) = x^{p} - x$ and $\f{m} = pR$.  We generalize and
recover this work as another application of our Theorem~\ref{T:generators}:  We
show in Theorem~\ref{T:Dickson} and its corollary that if $(R, \f{m})$ is an
Artinian local ring with a principal maximal ideal having index of nilpotency $e
\leq q$, then $\zf{\f{m}} = (x, \f{m})^{e}$, and thus $\zf{R} = (\pi(x),
\f{m})^{e}$ for any $\pi$-polynomial.  When $e > q$, the situation is more
complicated, but we take care of the case $e = q + 1$; the result is related to
results on \zf{R} for specific rings $R$ (\cite[Theorem~2.1]{B} and
\cite[Theorem II]{L}).

As further indication of the importance of $\pi$-polynomials and $\zf{\f{m}}$,
we provide two additional results.  Under suitable conditions, we prove in
Proposition \ref{P:Intersection} that $\zf{R}$ is the intersection of the
principal ideals generated by the $\pi$-polynomials, and in Proposition
\ref{P:Primary} we provide a minimal primary decomposition of \zf{R} as the
intersection of the ideals \zf{c_i + \f{m}}, where $c_{1}, \ldots, c_{q}$ is a
set of represetatives of the residue classes of \f{m}.  Since generators for
\zf{c_i + \f{m}} may be obtained from generators for \zf{\f{m}} by composition
with $x - c_{i}$, this shows that a primary decomposition for \zf{R} may be
obtained from knowing only a generating set for \zf{\f{m}}.  This result on
primary decomposition is a generalization of results from the paper \cite{P} of
G.~Peruginelli, which was concerned with the ring $R = \ZZ_{p^{n}}$.

The remaining theme of our paper is provided in Theorem \ref{T:LikeGilmer},
Theorem~\ref{T:Nonzero}, and Corollary \ref{C:Nonzero}, where we identify
conditions under which \zf{R} is nonzero, principal, and regular, and the same
for \zf{\f{m}}; these results explain why we often focus our attention on finite
rings.  The results generalize, have some overlap with, and were inspired by
R.~Gilmer's paper \cite{G}.

\section{Zero-Function Ideals and $\pi$-polynomials} \label{S:MainResults}

We begin with a precise definition of the zero-function ideal of a ring, and we
define a class of polynomials that plays an important role the study of
zero-function ideals.

\begin{definition}
  Let $R$ be a commutative ring with identity, let $S$ be a subset of $R$, and
  let $J$ be an ideal of $R$.  The set \zf{S,J} of polynomials in $R[x]$ which
  map $S$ into $J$ is easily seen to be an ideal of $R[x]$.  When the ideal $J$
  is omitted, it is assumed to be zero.  The focus of this paper is on \zf{R},
  which we call the \emph{zero-function ideal} of $R$.
\end{definition}

\begin{definition}
  Suppose the local ring $(R, \f{m})$ has a finite residue field.  If $c_{1},
  \ldots, c_{q}$ is any set of representatives of the residue classes of \f{m},
  then we call the polynomial $\pi(x) = \prod_{i = 1}^{q} (x - c_{i})$ a
  \emph{$\pi$-polynomial} for $R$.
\end{definition}





\begin{example}\label{E:Z8}
  We mentioned in the introduction that for $R = \ZZ_{9}$, $\zf{\f{m}} = (x,
  3)^{2}$.  However, for $R = \ZZ_{8}$, $\zf{\f{m}} \supsetneq (x, 2)^{3}$.  In
  fact, according to Theorem~\ref{T:Dickson} and a few brief calculations,
  $\zf{\f{m}} = (x, 2)^{3} + (x^{2} - 2x) = (x^{2} - 2x, 4x)$.  We may then use
  Theorem~\ref{T:generators} to compose with the $\pi$-polynomial $\pi(x) =
  x^{2} - x$ and conclude that $\zf{R} = ((x^{2} - x)^{2} - 2(x^{2} - x),
  4(x^{2} - x))$.
\end{example}



The following basic result is a generalized factor theorem that will be useful
in a few proofs.  The part related to units appears in Gilmer's proof of Theorem
4 in \cite{G}.

\begin{lemma}\label{L:Factor}
  Let $R$ be a commutative ring with identity.  If $f(x) \in R[x]$ is a
  polynomial with roots $c_{1}, c_{2}, \ldots, c_{n}$ such that each difference
  $c_{i} - c_{j}$ ($i \neq j$) is either a regular element or a unit, then
  $(x - c_{1})(x - c_{2}) \ldots (x - c_{n})$ divides $f(x)$ in $R[x]$.
\end{lemma}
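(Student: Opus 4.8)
The plan is to induct on $n$, the number of roots. The base case $n=1$ is the classical factor theorem, which holds over an arbitrary commutative ring: since $x - c_{1}$ is monic, the division algorithm in $R[x]$ gives $f(x) = (x - c_{1})q(x) + r$ with $r \in R$, and evaluating at $c_{1}$ forces $r = f(c_{1}) = 0$, so $x - c_{1} \mid f(x)$.

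For the inductive step, I would first observe that every unit of $R$ is a regular element, and that a product of regular elements is again regular; thus the hypothesis amounts to requiring each difference $c_{i} - c_{j}$ ($i \neq j$) to be a non-zero-divisor. Assuming the statement for $n-1$, the roots $c_{1}, \ldots, c_{n-1}$ have all their pairwise differences among the $c_{i} - c_{j}$, so the inductive hypothesis yields $f(x) = (x - c_{1}) \cdots (x - c_{n-1})\, g(x)$ for some $g(x) \in R[x]$. Evaluating via the homomorphism $R[x] \to R$, $x \mapsto c_{n}$, gives $0 = f(c_{n}) = \bigl( (c_{n} - c_{1}) \cdots (c_{n} - c_{n-1}) \bigr) g(c_{n})$. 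The element $(c_{n} - c_{1}) \cdots (c_{n} - c_{n-1})$ is a product of regular elements, hence regular, so $g(c_{n}) = 0$. By the $n=1$ case, $x - c_{n} \mid g(x)$, say $g(x) = (x - c_{n}) h(x)$, and therefore $f(x) = (x - c_{1}) \cdots (x - c_{n})\, h(x)$, completing the induction.

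The only place the hypothesis on the differences is used — and the single step requiring care — is the passage from $\bigl( (c_{n} - c_{1}) \cdots (c_{n} - c_{n-1}) \bigr) g(c_{n}) = 0$ to $g(c_{n}) = 0$, which needs the relevant differences to be non-zero-divisors so that their product may be cancelled. Everything else is formal: the division algorithm applies because each linear factor is monic, and evaluation at $c_{n}$ is a ring homomorphism. I do not expect any serious obstacle here; the lemma is essentially bookkeeping built on the monic division algorithm, extending the unit case used in Gilmer's proof of \cite[Theorem 4]{G} to arbitrary regular differences.
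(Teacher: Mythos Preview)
Your argument is correct and follows essentially the same route as the paper's proof: both peel off linear factors using the monic division algorithm and cancel the relevant differences because they are regular. The only cosmetic difference is that the paper unrolls the induction, removing one factor at a time and cancelling a single regular element at each step, whereas you phrase it as a formal induction and cancel the product $(c_{n}-c_{1})\cdots(c_{n}-c_{n-1})$ in one stroke; neither variant offers an advantage over the other.
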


\begin{proof}
  By the Factor Theorem, $f(x) = (x - c_{1})f_{1}(x)$ for some $f_{1}(x) \in
  R[x]$.  Substituting $c_{2}$ in for $x$, we obtain $0 = f(c_{2}) = (c_{2} -
  c_{1})f_{1}(c_{2})$.  Since $c_{2} - c_{1}$ is either a regular element or a
  unit, $c_{2}$ is a root of $f_{1}(x)$; we obtain some $f_{2}(x) \in R[x]$ with
  $f_{1}(x) = (x - c_{2})f_{2}(x)$.  So $f(x) = (x - c_{1})(x - c_{2})f_{2}(x)$.
  By continuing to evaluate at the $c$'s in this manner, we arrive at $f(x) = (x
  - c_{1})(x - c_{2}) \cdots (x - c_{n}) f_{n}(x)$ for some polynomial $f_{n}(x)
  \in R[x]$, as desired.
\end{proof}



\begin{convention}
  Throughout this paper, let $R$ be a Noetherian local ring with maximal ideal
  \f{m}.  Unless otherwise specified, the residue field $R/\f{m}$ will be
  denoted by \ol{R}.  The image in \ol{R} of an element $r \in R$ will be
  denoted by \ol{r}.  If the residue field is finite, $c_{1}, \ldots, c_{q}$
  will denote a set of representatives of the residue classes of \f{m} and
  $\pi(x)$ will denote the $\pi$-polynomial $\pi(x) = \prod_{i = 1}^{q} (x -
  c_{i})$.
\end{convention}

The following lemma may be viewed as a generalization of Fermat's Little
Theorem; it is well-known, at least in special cases, as remarked by D. J. Lewis
in \cite{L}.
A simple but important consequence of this lemma is that $\pi(R) \sse \f{m}$.
Later in Corollary~\ref{C:Onto} we will show that if $R$ is Henselian, then
$\pi(R) = \f{m}$.

\begin{lemma}\label{L:Stuff}
  If $(R, \f{m})$ is a Noetherian local ring with finite residue field of
  cardinality $q$, then $\zf{R, \f{m}} = (\pi(x), \f{m})$ for any
  $\pi$-polynomial $\pi(x)$.
\end{lemma}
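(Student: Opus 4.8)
The plan is to prove the two inclusions of the claimed equality separately, handling the nontrivial one by reduction modulo $\f{m}$.

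For the inclusion $(\pi(x), \f{m}) \sse \zf{R, \f{m}}$, I would note that $\zf{R, \f{m}}$ is an ideal of $R[x]$, so it is enough to check the generators. Any polynomial in $\f{m}R[x]$ has all coefficients in $\f{m}$ and so certainly sends $R$ into $\f{m}$. For $\pi(x)$: given $r \in R$ there is an index $i$ with $\ol{r} = \ol{c_{i}}$, hence $r - c_{i} \in \f{m}$, and therefore $\pi(r) = \prod_{j=1}^{q}(r - c_{j}) \in \f{m}$. (This reproves the promised consequence $\pi(R) \sse \f{m}$ directly.)

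For the reverse inclusion, let $f(x) \in \zf{R, \f{m}}$ and apply the coefficientwise reduction homomorphism $R[x] \lra \ol{R}[x]$, $g \mapsto \ol{g}$. For every $r \in R$ we have $\ol{f}(\ol{r}) = \ol{f(r)} = \ol{0}$ since $f(r) \in \f{m}$; as every element of $\ol{R}$ has the form $\ol{r}$, the polynomial $\ol{f}$ vanishes at every point of the field $\ol{R}$. By the Factor Theorem (equivalently Lemma~\ref{L:Factor}, since all differences of elements of the field $\ol{R}$ are units), $\prod_{a \in \ol{R}}(x - a)$ divides $\ol{f}$ in $\ol{R}[x]$; and because $\ol{c_{1}}, \ldots, \ol{c_{q}}$ is exactly an enumeration of the $q$ elements of $\ol{R}$, this product equals $\ol{\pi}(x) = \prod_{i=1}^{q}(x - \ol{c_{i}})$ (which is also $x^{q} - x$). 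So $\ol{f} = \ol{\pi}\,\ol{g}$ for some $\ol{g} \in \ol{R}[x]$; lifting $\ol{g}$ to any $g(x) \in R[x]$, the difference $f(x) - \pi(x)g(x)$ reduces to $0$ in $\ol{R}[x]$, hence lies in $\f{m}R[x]$. Therefore $f(x) \in (\pi(x), \f{m})$, which finishes the argument.

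I do not anticipate a real obstacle: the content is just that $\ol{\pi}(x) = x^{q} - x$ generates the vanishing ideal of the residue field together with lifting along the surjection $R[x] \to \ol{R}[x]$. The only step deserving a word of care is the fact that a polynomial over the finite field $\ol{R}$ vanishing identically is divisible by $\prod_{a \in \ol{R}}(x - a)$, and this is exactly Lemma~\ref{L:Factor} applied with the roots taken to be all of $\ol{R}$, whose pairwise differences are units.
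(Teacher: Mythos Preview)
Your proof is correct and follows essentially the same route as the paper: reduce modulo $\f{m}$, observe that $\ol{f}$ vanishes on $\ol{R}$, apply Lemma~\ref{L:Factor} over the field to get divisibility by $\ol{\pi}$, and pull back to $R[x]$; the easy inclusion is handled identically by checking that $\pi(r)$ has a factor in $\f{m}$. The only difference is cosmetic---you spell out the lifting of $\ol{g}$ and the identification $\ol{\pi}(x) = \prod_{a \in \ol{R}}(x-a)$ more explicitly than the paper does.
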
  

\begin{proof}
  Let $f(x) \in \zf{R, \f{m}}$; then $\ol{f}(x) \in \zf{\ol{R}}$.  By
  Lemma~\ref{L:Factor}, $\ol{f}(x)$ is in the ideal generated by $\ol{\pi}(x)$ in
  $\ol{R}[x]$; pull this back to $R[x]$ to get $f(x) \in (\pi(x), \f{m})$.

  For the opposite containment, certainly the constant polynomials in \f{m} are
  in \zf{R, \f{m}}.  Now suppose $\pi(x) = \prod_{i = 1}^{q} (x - c_{i})$.
  Since any element in $R$ is congruent modulo \f{m} to one of the $c_{i}$, the
  polynomial $\pi(x)$ is in \zf{R,\f{m}}, as desired.
\end{proof}

\begin{example}\label{E:NotPi}
  Let $R = \ZZ_{(5)}$ ($\ZZ$ localized at the prime ideal $(5)$), so that $\f{m}
  = (5)$, $\ol{R} = \ZZ_{5}$, and $q = 5$.  The polynomial $x^{q} - x$ is not a
  $\pi$-polynomial since it doesn't factor completely over $R \sse \QQ$:  $x^{5}
  - x = x(x - 1)(x + 1)(x^{2} + 1)$.  However, by Lagrange's Theorem applied to
  the group of units of \ol{R}, it is still true that $x^{5} - x \in \zf{R,
    \f{m}}$.  According to the lemma, we expect $x^{5} - x \in (\pi(x), \f{m})$
  for any $\pi$-polynomial.  In fact, if for example we let $\pi(x) = (x - 2)(x
  + 1)x(x - 1)(x + 2)$, then $x^{5} - x = \pi(x) + 5(x^{3} - 1) \in (\pi(x),
  \f{m})$.
\end{example}

In the next result we show that the zero-functions are precisely those
polynomials that are multiples of each $\pi$-polynomial.

\begin{proposition}\label{P:Intersection}
  Let $(R, \f{m})$ be a Noetherian ring with finite residue field \ol{R} of
  cardinality $q$.  The zero-function ideal of $R$ is the intersection of the
  principal ideals generated by the $\pi$-polynomials:  $\zf{R} = \bigcap
  (\pi(x))$.
\end{proposition}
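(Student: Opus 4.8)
The plan is to prove the two containments $\zf{R} \sse \bigcap (\pi(x))$ and $\bigcap (\pi(x)) \sse \zf{R}$ separately. For the first, I would take $f(x) \in \zf{R}$ and fix an arbitrary $\pi$-polynomial $\pi(x) = \prod_{i=1}^{q}(x - c_{i})$. Since $f$ vanishes on all of $R$, in particular $f(c_{i}) = 0$ for each $i$, so each $c_{i}$ is a root of $f$. The key point is then to apply Lemma~\ref{L:Factor}: I need each difference $c_{i} - c_{j}$ ($i \neq j$) to be a regular element or a unit. But the $c_{i}$ are representatives of \emph{distinct} residue classes of \f{m}, so $c_{i} - c_{j} \notin \f{m}$ for $i \neq j$, hence $c_{i} - c_{j}$ is a unit in the local ring $R$. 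Thus Lemma~\ref{L:Factor} gives $\pi(x) \mid f(x)$ in $R[x]$, i.e.\ $f(x) \in (\pi(x))$. Since $\pi(x)$ was arbitrary, $f(x) \in \bigcap (\pi(x))$.

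For the reverse containment, I would take $f(x) \in \bigcap (\pi(x))$ and show $f$ vanishes at every $a \in R$. Given $a \in R$, the strategy is to build a $\pi$-polynomial that has $a$ as one of its roots: choose representatives $c_{1}, \ldots, c_{q}$ of the residue classes of \f{m} with $c_{1} = a$ (possible since $a$ lies in one of the $q$ classes, and we may take it as the representative of that class, filling in the remaining classes arbitrarily). Then $\pi(x) = \prod_{i=1}^{q}(x - c_{i})$ is a $\pi$-polynomial with $\pi(a) = 0$. Since $f(x) \in (\pi(x))$, write $f(x) = \pi(x) g(x)$ for some $g(x) \in R[x]$; evaluating at $a$ yields $f(a) = \pi(a) g(a) = 0$. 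As $a$ was arbitrary, $f \in \zf{R}$.

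I do not expect a serious obstacle here: the argument is short and both directions reduce to a single application of an already-established tool (Lemma~\ref{L:Factor} for one direction, the freedom to choose residue-class representatives for the other). The one point that warrants a sentence of care is the observation that in a local ring, an element outside the maximal ideal is a unit, which is exactly what licenses the use of Lemma~\ref{L:Factor} in the first containment; and in the second containment one should note explicitly that every element of $R$ can be taken as a member of a complete set of residue-class representatives. Neither of these is deep, so the proof should be only a paragraph or two long.
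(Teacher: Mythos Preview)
Your proposal is correct and follows essentially the same approach as the paper's proof: both directions are handled just as you describe, with Lemma~\ref{L:Factor} (using that the $c_i - c_j$ are units) for $\zf{R} \sse \bigcap(\pi(x))$, and the freedom to choose an arbitrary $r \in R$ as one of the residue-class representatives for the reverse containment. Your write-up is in fact slightly more explicit than the paper's about why the hypotheses of Lemma~\ref{L:Factor} are met.
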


\begin{proof}
  The fact that any zero-function $f(x) \in \zf{R}$ is a multiple of any
  $\pi$-polynomial $\pi(x)$ follows immediately from Lemma~\ref{L:Factor}.  This
  shows $\zf{R} \sse \bigcap (\pi(x))$.

  Let $f(x) \in \bigcap (\pi(x))$ and let $r \in R$; we show $f(r) = 0$.  We may
  extend $r$ to a set $r, c_{2}, \ldots, c_{q}$ of representatives of the
  residue classes of \f{m}, and thus $\pi(x) = (x - r)(x - c_{2}) \cdots (x -
  c_{q})$ is a $\pi$-polynomial having $r$ as a root.  Since $f(x)$ is a
  multiple of $\pi(x)$, $f(r) = 0$.  Thus $f(x) \in \zf{R}$.


  
\end{proof}

In the following result we give a minimal primary decomposition of \zf{R} if $R$
is finite.  First, we give a simple example:

\begin{example}\label{E:Primary}
  For $R = \ZZ_{9}$, as mentioned in the introduction, we have $\zf{m} = (x,
  3)^{2} = (x^{2}, 3x)$, so the proposition below gives the following minimal
  primary decomposition of \zf{R}:
  \[
  ((x^{3} - x)^{2}, 3(x^{3} - x)) = (x^{2}, 3x) \cap ((x - 1)^{2}, 3(x - 1))
  \cap ((x - 2)^{2}, 3(x - 2)),
  \]
  where the ideals on the right are primary for the maximal ideals $(x, 3)$, $(x
  - 1, 3)$, and $(x - 2, 3)$, respectively.
\end{example}

\begin{proposition}\label{P:Primary}
  Let $(R, \f{m})$ be a finite local ring with residue field $\ol{R}$ of
  cardinality $q$.  Let $c_{1}, \ldots, c_{q}$ be a set of representatives of
  the residue classes of \f{m}.  Then $\zf{R} = \bigcap_{i = 1}^{q} \zf{c_{i} +
    \f{m}}$ is a minimal primary decomposition of \zf{R}.  For each $i$, the
  associated prime of \zf{c_i + \f{m}} is the maximal ideal $(x - c_{i},
  \f{m})$.
\end{proposition}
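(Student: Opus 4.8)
The plan is to first obtain the decomposition at the level of sets, then analyze each component $\zf{c_i + \f{m}}$ in isolation, and finally verify minimality. Since $R$ is the disjoint union of the cosets $c_1 + \f{m}, \ldots, c_q + \f{m}$, a polynomial vanishes on $R$ precisely when it vanishes on each coset, so $\zf{R} = \bigcap_{i=1}^{q} \zf{c_i + \f{m}}$ is immediate; the real content is that this is a \emph{minimal primary} decomposition with the stated associated primes.

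Fix $i$ and write $\f{M}_i = (x - c_i, \f{m})$. I would first note that $\f{M}_i$ is maximal in $R[x]$, since $R[x]/\f{M}_i \cong \ol{R}$. Next, $\zf{c_i + \f{m}} \sse \f{M}_i$: any $f \in \zf{c_i + \f{m}}$ satisfies $f(c_i) = 0$ because $c_i \in c_i + \f{m}$, so the Factor Theorem gives $f \in (x - c_i) \sse \f{M}_i$; as $\f{M}_i$ is prime, this already yields $\sqrt{\zf{c_i + \f{m}}} \sse \f{M}_i$. For the reverse inclusion, choose $N$ with $\f{m}^{N} = 0$ (available since $R$ is finite, hence Artinian local). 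Then $(x - c_i)^{N}$ sends $c_i + m \mapsto m^{N} = 0$ for every $m \in \f{m}$, so $(x - c_i)^{N} \in \zf{c_i + \f{m}}$; and each $a \in \f{m}$ has $a^{N} = 0 \in \zf{c_i + \f{m}}$. Thus every generator of $\f{M}_i$ lies in $\sqrt{\zf{c_i + \f{m}}}$, so $\f{M}_i \sse \sqrt{\zf{c_i + \f{m}}}$ and therefore $\sqrt{\zf{c_i + \f{m}}} = \f{M}_i$. Because an ideal whose radical is a maximal ideal is primary (in the quotient ring there is a unique prime, so every element is a unit or is nilpotent, which forces the primary condition), $\zf{c_i + \f{m}}$ is $\f{M}_i$-primary, and this identifies its associated prime as $(x - c_i, \f{m})$.

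It then remains to check minimality: the radicals $\f{M}_1, \ldots, \f{M}_q$ must be pairwise distinct and no component may be omitted. Distinctness: if $\f{M}_i = \f{M}_j$ with $i \neq j$, then passing to $R[x]/\f{M}_i \cong \ol{R}$ and comparing the images of $x$ forces $\ol{c_i} = \ol{c_j}$, contradicting that $c_1, \ldots, c_q$ represent distinct residue classes. Irredundancy: for each $j$ I would exhibit $g_j := \prod_{i \neq j} (x - c_i)^{N}$. For $i \neq j$ the factor $(x - c_i)^{N}$ kills the coset $c_i + \f{m}$ as above, so $g_j \in \zf{c_i + \f{m}}$, whence $g_j \in \bigcap_{i \neq j} \zf{c_i + \f{m}}$. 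But $g_j(c_j) = \prod_{i \neq j} (c_j - c_i)^{N}$ is a product of units, since each $c_j - c_i \notin \f{m}$ is a unit in the local ring $R$; hence $g_j(c_j) \neq 0$ and $g_j \notin \zf{c_j + \f{m}}$. Therefore $\bigcap_{i \neq j} \zf{c_i + \f{m}} \not\sse \zf{c_j + \f{m}}$ for every $j$, so the decomposition is minimal.

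Every step here is elementary; the only points demanding a little care are the bookkeeping of the two inclusions that give $\sqrt{\zf{c_i + \f{m}}} = \f{M}_i$ and the appeal to the fact that an ideal with maximal radical is primary — both routine once the nilpotence of $\f{m}$ is in hand. (As a streamlining one could instead reduce to the case $c_i = 0$ via the $R[x]$-automorphism $x \mapsto x + c_i$, prove everything for $\zf{\f{m}}$, and transport the conclusions, but this is merely a convenience.)
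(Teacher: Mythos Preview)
Your argument is correct and follows essentially the same route as the paper's: the intersection is immediate from $R=\bigcup_i(c_i+\f{m})$, primariness comes from $\f{m}^{e}=0$ (the paper packages this as the single containment $(x-c_i,\f{m})^{e}\sse\zf{c_i+\f{m}}$ rather than checking generators separately, but the content is the same), and irredundancy is witnessed by the identical polynomial $\prod_{i\neq j}(x-c_i)^{e}$. Your explicit check that the $\f{M}_i$ are pairwise distinct is a detail the paper leaves implicit.
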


\begin{proof}
  Certainly any zero-function on $R$ is also zero on each $c_{i} + \f{m}$.  For
  the other inclusion, use the fact that
  $R = \bigcup_{i = 1}^{q} (c_{i} + \f{m})$.  For the statement about associated
  primes, let $e$ be the index of nilpotency of \f{m}, so that $\f{m}^{e} = 0$
  but $\f{m}^{e - 1} \neq 0$, and use the fact that
  $(x - c_{i}, \f{m})^{e} \sse \zf{c_{i} + \f{m}}$.

  As for the minimality of the decomposition, let $j$ be an integer between 1
  and $q$; we show that $\zf{R} \ssne \bigcap_{i \neq j} \zf{c_{i} + \f{m}}$.
  Let $h(x) = \prod_{i \neq j} (x - c_{i})^{e}$; then $h(x) \in \bigcap_{i \neq
    j} \zf{c_{i} + \f{m}}$ since $\f{m}^{e} = 0$.  To see that $h(x)$ is not a
  zero-function, note that $h(c_{j}) = \prod_{i \neq j} (c_{j} - c_{i})^{e}$ is
  a product of units, and is thus nonzero.
\end{proof}

Next we provide an equivalent way to view $\pi$-polynomials, provided the ring
is Henselian; of course, this holds for the finite local rings in which we are
mainly interested.  For an example where the two conditions below are not
equivalent, see Example~\ref{E:NotPi}.  This theorem is also needed in our proof
that $\pi$-polynomials map $R$ surjectively onto \f{m}.

\begin{theorem}\label{T:EquivalenceOfPi}
  Let $R$ be a Henselian local ring with finite residue field \ol{R} of
  cardinality $q$.  For any polynomial $\pi(x) \in R[x]$, the following
  statements are equivalent:
  \begin{enumerate}[(i)]
    \item The polynomial $\pi(x)$ is a $\pi$-polynomial.
    
    \item The polynomial $\pi(x)$ is monic and maps to $x^{q} - x$ in $\ol{R}[x]$.
  \end{enumerate}
\end{theorem}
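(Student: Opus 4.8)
The plan is to prove the two implications separately, with the Henselian hypothesis entering only in the direction (ii)$\Rightarrow$(i).

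For (i)$\Rightarrow$(ii): if $\pi(x) = \prod_{i=1}^{q}(x - c_{i})$ is a $\pi$-polynomial, then it is visibly monic, being a product of monic linear polynomials, and has degree $q$. Reducing modulo \f{m} gives $\ol{\pi}(x) = \prod_{i=1}^{q}(x - \ol{c_{i}})$, and since $\ol{c_{1}}, \ldots, \ol{c_{q}}$ runs over all $q$ elements of the field \ol{R}, the standard identity $\prod_{a \in \ol{R}}(x - a) = x^{q} - x$ finishes this direction. I would justify that identity in one line: every element of \ol{R} is a root of $x^{q} - x$ (Lagrange's theorem on the group of units, together with $0$), and both sides are monic of degree $q$. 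No hypothesis beyond finiteness of \ol{R} is needed here.

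For (ii)$\Rightarrow$(i): suppose $\pi(x)$ is monic with $\ol{\pi}(x) = x^{q} - x$. Since $\pi$ is monic, $\deg \pi = \deg \ol{\pi} = q$. In $\ol{R}[x]$ we have the factorization $x^{q} - x = \prod_{a \in \ol{R}}(x - a)$ into monic linear polynomials whose roots are the distinct elements of a field, so these factors are pairwise comaximal. The Henselian hypothesis — in the form of Hensel's lemma that lifts a factorization of a monic polynomial into pairwise comaximal monic factors — then produces monic $g_{1}(x), \ldots, g_{q}(x) \in R[x]$ with $\pi(x) = \prod_{i=1}^{q} g_{i}(x)$ and $\ol{g_{i}}(x) = x - a_{i}$, where $a_{1}, \ldots, a_{q}$ enumerates \ol{R}. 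Because each $g_{i}$ is monic and reduces to a degree-one polynomial, $g_{i}$ itself has degree one, so $g_{i}(x) = x - c_{i}$ for some $c_{i} \in R$ with $\ol{c_{i}} = a_{i}$. Hence $c_{1}, \ldots, c_{q}$ is a set of representatives of the residue classes of \f{m} and $\pi(x) = \prod_{i=1}^{q}(x - c_{i})$ is a $\pi$-polynomial.

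The only nontrivial input is the multi-factor version of Hensel's lemma used in the second paragraph, so I would supply a short derivation of it by induction on the number of factors from the usual two-factor formulation of the Henselian property: at each step one splits off a single linear factor $x - a_{i}$, which is comaximal with the product of the remaining linear factors. A minor point worth stating explicitly is that the Hensel lift of a monic polynomial may be taken monic of the same degree as its reduction, which is what forces the lifts of the linear factors to be genuinely linear; this is part of the standard statement, and everything else is bookkeeping. I expect this comaximal-lifting step to be the main thing to get right, as the rest is essentially the finite-field identity $\prod_{a}(x-a) = x^{q}-x$ read in both directions.
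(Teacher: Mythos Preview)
Your proof is correct and follows essentially the same route as the paper's: both directions hinge on the finite-field identity $\prod_{a\in\ol{R}}(x-a)=x^{q}-x$, and for (ii)$\Rightarrow$(i) the paper likewise factors $x^{q}-x$ over $\ol{R}$ and invokes Hensel's Lemma to lift to a product of linear factors over $R$. Your write-up is a bit more explicit about the comaximality needed for the multi-factor lift and about why the lifted factors remain linear, but these are straightforward elaborations of the same argument.
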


\begin{proof}
  Let $\pi(x)$ be any $\pi$-polynomial.  Since \ol{R} is a field with $q$
  elements, by Lagrange's theorem on the group of units of \ol{R}, $x^{q} - x$
  is a zero-function on \ol{R}.  By Lemma~\ref{L:Factor}, $\ol{\pi}(x)$ divides
  $x^{q} - x$ in $\ol{R}[x]$.  Since these are monic polynomials of the same
  degree, they are equal.

  For the converse, suppose $\pi(x)$ is any monic polynomial with $\ol{\pi}(x) =
  x^{q} - x$.  Let $\ol{R} = \{\ol{d_{1}}, \ldots, \ol{d_{q}}\}$; as discussed
  in the previous paragraph, $x^{q} - x = \prod_{i = 1}^{q} (x - \ol{d_{i}})$ in
  $\ol{R}[x]$.  By Hensel's Lemma, this factorization of $\ol{\pi}(x)$ can be
  pulled back to a factorization in $R[x]$:  There exist $c_{i}$ in $R$ with
  $\ol{c_{i}} = \ol{d_{i}}$ such that $\pi(x) = \prod_{i = 1}^{q} (x - c_{i})$.
  Thus $\pi(x)$ is a $\pi$-polynomial.
\end{proof}

In the following corollary, we improve upon part of Lemma~\ref{L:Stuff} by
showing that the induced function $\pi \colon R \to \f{m}$ is actually
surjective when $R$ is Henselian.  This generalizes Lemma~1.3 of \cite{B}, where
A.~Bandini proved that, for any prime $p$, $\pi(R) = \f{m}$ in case $R =
\ZZ_{p^{n}}$ and $\pi(x) = x^{p} - x$.  We use this corollary in our
Theorem~\ref{T:LikeGilmer}, where we characterize finite rings with principal
zero-function ideals, expanding upon Gilmer \cite{G}.  It is used again in
Proposition~\ref{P:composition}, which is fundamental for our
Theorem~\ref{T:generators}, which shows that generators for $\zf{R}$ may be
obtained by composing generators for \zf{\f{m}} with a $\pi$-polynomial.

\begin{corollary}\label{C:Onto}
  If $(R, \f{m})$ is a Henselian local ring with finite residue field \ol{R} of
  cardinality $q$, then $\pi(R) = \pi(\f{c}) = \f{m}$ for any $\pi$-polynomial
  $\pi(x)$ and any coset \f{c} of \f{m}.
\end{corollary}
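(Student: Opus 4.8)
The plan is to prove the two equalities $\pi(\f{c}) = \f{m}$ for an arbitrary coset $\f{c} = c + \f{m}$ and $\pi(R) = \f{m}$, using the characterization of $\pi$-polynomials from Theorem~\ref{T:EquivalenceOfPi} together with Hensel's Lemma. First I would note that $\pi(R) \sse \f{m}$ is already known (it is the ``simple but important consequence'' of Lemma~\ref{L:Stuff}), and since $\f{c} \sse R$ we have $\pi(\f{c}) \sse \pi(R) \sse \f{m}$ for free. So both nontrivial inclusions amount to surjectivity, and it suffices to prove the single statement $\pi(\f{c}) = \f{m}$ for an arbitrary coset $\f{c}$: taking $\f{c} = \f{m}$ (the coset of $0$, which contains the representative $0$ — or more precisely, taking any coset) gives $\f{m} = \pi(\f{c}) \sse \pi(R) \sse \f{m}$, forcing $\pi(R) = \f{m}$ as well.

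**The core argument.** Fix a coset $\f{c} = c + \f{m}$ and fix an arbitrary target $m \in \f{m}$; I must produce $r \in c + \f{m}$ with $\pi(r) = m$. Consider the polynomial $g(x) = \pi(x) - m \in R[x]$. By Theorem~\ref{T:EquivalenceOfPi}, $\pi(x)$ is monic and $\ol{\pi}(x) = x^{q} - x$; since $m \in \f{m}$ we have $\ol{m} = 0$, so $\ol{g}(x) = x^{q} - x = \prod_{i=1}^{q}(x - \ol{d_i})$ where $\ol{d_1}, \ldots, \ol{d_q}$ are the distinct elements of $\ol{R}$. One of these $\ol{d_i}$ equals $\ol{c}$; relabel so that $\ol{d_1} = \ol{c}$. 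Then $\ol{d_1}$ is a simple root of $\ol{g}(x)$ (the roots of $x^q - x$ in a field with $q$ elements are distinct), i.e. $\ol{g}(\ol{c}) = 0$ and $\ol{g}'(\ol{c}) = \ol{\pi}'(\ol{c}) \neq 0$. Now $g(x)$ is monic, hence the hypotheses of Hensel's Lemma are met: the simple root $\ol{c}$ of $\ol{g}$ lifts to a root $r \in R$ of $g$ with $\ol{r} = \ol{c}$, that is, $\pi(r) - m = 0$ and $r \in c + \f{m}$. This is exactly what was needed, so $m \in \pi(\f{c})$, proving $\f{m} \sse \pi(\f{c})$.

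**Alternative packaging and the main obstacle.** An alternative to applying Hensel's Lemma pointwise to $\pi(x) - m$ is to apply it once to the two-variable situation: $\pi(x) - m$ as $m$ ranges over $\f{m}$, or equivalently to observe directly that since $\ol{\pi}'$ is a unit at each residue class (because $x^q - x$ has distinct roots), the map $\pi \colon R \to R$ is ``étale along each coset'' and Hensel's Lemma gives a section. Either way, the genuine content is the same. I expect the main (mild) obstacle to be bookkeeping about simplicity of the root: one must verify $\ol{\pi}'(\ol{c}) \neq 0$, which follows because $x^q - x$ and its derivative $q x^{q-1} - 1$ share no common root in $\ol{R}$ — indeed $\gcd(x^q - x,\, qx^{q-1} - 1)$ is trivial since at any root $a$ of $x^q-x$ we have $a^q = a$, so $q a^{q-1} - 1$ evaluates to $q \cdot a^{q-1} - 1$, and a short case check on whether $a = 0$ or $a \neq 0$ shows this is a unit (when $a \ne 0$, $a^{q-1}=1$ so the value is $q-1$; one checks $q - 1 \ne 0$ in $\ol R$ unless $q$... here one simply uses that the $q$ elements $\ol d_i$ are pairwise distinct, so the factorization $\prod(x-\ol d_i)$ is squarefree, which is cleaner). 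Modulo that routine point, the proof is a direct application of Theorem~\ref{T:EquivalenceOfPi} and Hensel's Lemma, and I would write it in three or four sentences.
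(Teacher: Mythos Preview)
Your proof is correct and follows essentially the same idea as the paper's: both reduce to showing $\f{m} \sse \pi(\f{c})$ by considering $\pi(x) - m$ and lifting a root via Hensel.  The paper's version is slightly more streamlined in that, rather than invoking Hensel's Lemma directly, it observes that $\pi(x) - m$ is monic and reduces to $x^{q} - x$, hence by Theorem~\ref{T:EquivalenceOfPi} is itself a $\pi$-polynomial---so its $q$ roots automatically form a full set of coset representatives, one of which lies in $\f{c}$; this avoids repeating the Hensel argument, which is already packaged inside the proof of Theorem~\ref{T:EquivalenceOfPi}.
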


\begin{proof}
  We show that $\pi(\f{c}) \sse \pi(R) \sse \f{m} \sse \pi(\f{c})$.  The first
  containment is clear since $\f{c} \sse R$, and we saw the second containment
  in Lemma~\ref{L:Stuff}.  For the final containment, let $m \in \f{m}$.  By
  Theorem~\ref{T:EquivalenceOfPi}, the polynomial $\pi(x) - m$ is still a
  $\pi$-polynomial, and thus it factors over $R$:  $\pi(x) - m = (x - c_{1})(x -
  c_{2}) \cdots (x - c_{q})$. This shows that for each $i$, $\pi(c_{i}) = m$.
  Since $c_{1}, c_{2}, \ldots, c_{q}$ is a set of representatives of the residue
  classes of \f{m}, one of them, say $c_{j}$, is in \f{c}.  Thus $m = \pi(c_{j})
  \in \pi(\f{c})$, as desired.
\end{proof}




\section{When \zf{R} and \zf{\f{m}} are Nonzero, Regular, or Principal}

In the upcoming Theorems~\ref{T:LikeGilmer} and \ref{T:Nonzero} we will use the
following result from B.~R.~McDonald.  McDonald states and proves the theorem
for any finite local ring $(R, \f{m})$, but the theorem and proof still hold
when $R$ is just Artinian.  The notation McDonald uses is different from ours
but the part we will use is that over an Artinian local ring, any regular
polynomial is an associate of a monic polynomial.  McDonald writes $\mu f$ where
we would write $\ol{f}$, the image of $f$ in $\ol{R}[x]$.

\begin{theorem}\label{T:McDonaldAssociate}\cite[Theorem XIII.6, p. 259]{M}
Let $f$ be a regular polynomial in $R[x]$.  Then there is a monic polynomial
$f^{*}$ with $\mu f = \mu f^{*}$ and, for an element $a$ in $R$, $f(a) = 0$ if
and only if $f^{*}(a) = 0$.  Furthermore, there is a unit $v$ in $R[x]$ with $vf
= f^{*}$.  
\end{theorem}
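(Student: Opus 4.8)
The plan is to replace the regularity hypothesis by a concrete statement about the coefficients of $f$, and then induct on the index of nilpotency of \f{m}.

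\emph{Step 1: regular is the same as having a unit coefficient.} Write $f = a_{0} + a_{1}x + \dots + a_{n}x^{n}$. By McCoy's theorem, $f$ is a zero-divisor in $R[x]$ exactly when $\Ann_{R}(a_{0},\dots,a_{n}) \neq 0$. In an Artinian local ring every proper ideal $I$ has nonzero annihilator: $I \sse \f{m}$ is nilpotent, so if $I \neq 0$ and $j$ is largest with $I^{j} \neq 0$, then $0 \neq I^{j} \sse \Ann_{R}(I)$. Hence $f$ is regular iff $(a_{0},\dots,a_{n}) = R$, that is, iff some $a_{i}$ is a unit. So it suffices to prove: \emph{if some coefficient of $f$ is a unit, then $vf$ is monic for some unit $v \in R[x]$}; the remaining conclusions will be read off at the end.

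\emph{Step 2: the induction and its reduction step.} Let $k$ be least with $\f{m}^{k} = 0$, and induct on $k$. If $k = 1$ then $R$ is a field, so the nonzero polynomial $f$ is a unit times a monic polynomial. If $k \geq 2$, set $R' = R/\f{m}^{k-1}$, an Artinian local ring whose maximal ideal is nilpotent of index $\leq k-1$; the image $f'$ of $f$ in $R'[x]$ still has a unit coefficient, so by induction $v'f'$ is monic for some unit $v' \in R'[x]$. Lift $v'$ to $v \in R[x]$: its image in $\ol{R}[x]$ is a nonzero constant (the reduction of the unit $v'$), so $v$ has unit constant term and higher coefficients in \f{m}, hence is a unit of $R[x]$. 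Lift the monic $v'f'$ to a monic $g \in R[x]$. Then $vf - g$ reduces to $0$ in $R'[x]$, so $vf = g + r$ with every coefficient of $r$ in $\f{m}^{k-1}$.

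\emph{Step 3: a single correction suffices (the crux).} Let $r_{+}$ be the part of $r$ of degree $\geq \deg g$, and divide it by the monic polynomial $g$: $r_{+} = \epsilon g + \rho$ with $\deg \rho < \deg g$. Since division by a monic polynomial keeps the quotient and remainder inside the ideal generated by the dividend's coefficients, $\epsilon$ and $\rho$ again have all coefficients in $\f{m}^{k-1}$. Put $v_{2} = 1 - \epsilon$, a unit of $R[x]$. Then $v_{2}(g+r) = g + r - \epsilon g - \epsilon r$, and $\epsilon r = 0$ because its coefficients lie in $\f{m}^{k-1}\f{m}^{k-1} \sse \f{m}^{2(k-1)} = 0$ (using $k \geq 2$). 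With $r_{-} = r - r_{+}$ this gives $v_{2}(g+r) = g + r_{-} + \rho$, monic of degree $\deg g$. So, setting $V = v_{2}v$ (a unit of $R[x]$), $f^{*} := Vf$ is monic.

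\emph{Step 4: the other assertions, and the main obstacle.} For any $a \in R$, $V(a)$ reduces mod \f{m} to the unit constant term of $V$, hence is a unit of $R$, so $f^{*}(a) = V(a)f(a)$ vanishes exactly when $f(a)$ does. Moreover $\ol{f^{*}} = \ol{V}\,\ol{f}$ with $\ol{V}$ a nonzero scalar, so $\ol{f^{*}}$ and $\ol{f}$ are associates in $\ol{R}[x]$; this records the relation $\mu f = \mu f^{*}$. The hard point is Step 3: one must keep the error term $r$, and hence the correction $\epsilon$, inside $\f{m}^{k-1}[x]$ so that $\epsilon r = 0$. That vanishing is precisely what collapses what would otherwise be an infinite regress of ever-smaller corrections into one step, and it is the single place where the Artinian hypothesis (equivalently, that \f{m} is nilpotent), rather than merely ``local,'' is essential.
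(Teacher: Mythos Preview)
The paper does not prove this statement at all: it is quoted verbatim from McDonald's book with a citation and no argument, and the only part the authors actually use is that a regular polynomial over an Artinian local ring is an associate of a monic one. So there is no ``paper's own proof'' to compare against; your proposal supplies a proof where the paper has none.

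Your argument is correct and self-contained. Step~1 is the standard McCoy/annihilator characterization of regular polynomials over an Artinian local ring; Step~2 sets up the inductive lift; Step~3 is the heart, and the computation $\epsilon r \in \f{m}^{2(k-1)}[x] = 0$ for $k \geq 2$ is exactly what makes a single correction suffice. The unit claims for $v$, $v_{2}$, and $V$ all follow from the fact that a polynomial over a local ring with nilpotent maximal ideal is a unit iff its constant term is a unit and its higher coefficients lie in \f{m}, which you verify via reduction to $\ol{R}[x]$.

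One small point worth flagging: in Step~4 you obtain $\ol{f^{*}} = \ol{V}\,\ol{f}$ with $\ol{V}$ a nonzero constant, so $\ol{f}$ and $\ol{f^{*}}$ are \emph{associates} in $\ol{R}[x]$, not literally equal. You write ``this records the relation $\mu f = \mu f^{*}$,'' but literal equality is generally impossible (if $\ol{f}$ is not monic, no monic $f^{*}$ can have $\ol{f^{*}} = \ol{f}$). This is not a flaw in your proof but an imprecision in the statement as transcribed; the paper only uses the associate relation anyway, and your argument delivers that cleanly.
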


One of the motivations for the current paper is Theorem 4 from \cite{G}, which
states that if $(R, \f{m})$ is a zero-dimensional local ring, then \zf{R} is
principal if and only if either $\ol{R}$ is infinite (when $\zf{R} = 0$) or $R$
is a finite field (when \zf{R} is generated by $x^{q} - x$.)  The following
result recovers half of Gilmer's result, using some of the same ideas but a few
different ones as well.  For example, Gilmer used a result of E.~Snapper;
instead we use the result of McDonald mentioned above.  Also, we make the
connection with $\pi$-polynomials and \zf{\f{m}}.  The other half of Gilmer's
result is recovered in Theorem~\ref{T:Nonzero}.

\begin{theorem}\label{T:LikeGilmer}
  Let $(R, \f{m})$ be a finite local ring and let $\pi(x)$ be any
  $\pi$-polynomial for $R$.  The following statements are equivalent:
  \begin{enumerate}
  \item $R$ is a field.
  \item \zf{R} is principal.
  \item $\zf{R} = (\pi(x))$.
  \item \zf{\f{m}} is principal.
  \item $\zf{\f{m}} = (x)$.
  \end{enumerate}
\end{theorem}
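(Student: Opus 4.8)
The plan is to prove a cycle of implications, roughly (1)$\Rightarrow$(3)$\Rightarrow$(2)$\Rightarrow$(1) together with (1)$\Rightarrow$(5)$\Rightarrow$(4)$\Rightarrow$(1), so that all five conditions are tied together. Some of these links are immediate: (3)$\Rightarrow$(2) and (5)$\Rightarrow$(4) are trivial, since a specified generator certainly makes the ideal principal. For (1)$\Rightarrow$(3), if $R$ is a field then $R=\ol{R}$, $\f{m}=0$, and $\pi(x)$ is a monic degree-$q$ polynomial with all $q$ field elements as roots; by Lemma~\ref{L:Factor} any zero-function is divisible by $\pi(x)$, and conversely $\pi$ vanishes on $R$, so $\zf{R}=(\pi(x))$. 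For (1)$\Rightarrow$(5): when $R$ is a field, a polynomial vanishes on $\f{m}=\{0\}$ iff it has zero constant term, i.e. iff it lies in $(x)$; so $\zf{\f{m}}=(x)$.

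The substance is in the reverse directions (2)$\Rightarrow$(1) and (4)$\Rightarrow$(1), i.e. showing that if $R$ is \emph{not} a field then neither $\zf{R}$ nor $\zf{\f{m}}$ can be principal. I would handle (4)$\Rightarrow$(1) first, or rather its contrapositive. Suppose $\f{m}\neq 0$, and let $e\geq 2$ be the index of nilpotency of $\f{m}$. The key point is that $\zf{\f{m}}$ contains both $x^{e}$ (since $\f{m}^{e}=0$) and every constant in $\Ann_{R}(\f{m}^{e-1})$, in particular a nonzero constant $a$ with $a\f{m}^{e-1}=0$; more usefully, $\zf{\f{m}}$ contains $a x$ for a suitable proper nonzero ideal's worth of $a$'s while also containing a monic polynomial. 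If $\zf{\f{m}}=(g)$ were principal, then $g$ divides the monic polynomial $x^{e}$, so (arguing via McDonald's Theorem~\ref{T:McDonaldAssociate}, since $g$ would then be a regular polynomial) $g$ is an associate of a monic polynomial, and by comparing with the constant multiples $ax\in\zf{\f{m}}$ we force $\deg g\leq 1$; then $g$ an associate of a monic linear polynomial gives $\zf{\f{m}}=(x-c)$ for some $c$, but $\zf{\f{m}}$ also contains the \emph{constant} $a\neq 0$, and $a\notin(x-c)$ unless $c$ is a unit, which it is not since $0\in\f{m}$ is a root. This contradiction shows $\f{m}=0$.

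For (2)$\Rightarrow$(1) I would run a parallel argument for $\zf{R}$ itself, using that $\zf{R}$ contains the $\pi$-polynomial $\pi(x)$ (monic, degree $q$) as well as $\pi(x)^{e}$ and, crucially, nonzero elements of smaller ``size'': by Lemma~\ref{L:Stuff} and the structure of $\zf{R,\f{m}}$, one has elements like $a\,\pi(x)$ with $a\f{m}^{e-1}=0$, and also constants in $\zf{R}$ would have to be $0$ since $R$ has a non-nilpotent... wait — here one uses that $\zf{R}$ contains no nonzero constant (a nonzero constant does not vanish at, say, a unit), which sharpens things: if $\zf{R}=(g)$, then $g\mid \pi(x)$, so $g$ is regular, hence an associate of a monic polynomial $g^{*}$ by Theorem~\ref{T:McDonaldAssociate}, and $\zf{R}=(g^{*})$ with $g^{*}$ monic. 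Since $\pi(x)\in\zf{R}$ is monic of degree $q$ and divisible by $g^{*}$, either $\deg g^{*}=q$ (so $g^{*}$ is itself a $\pi$-polynomial, forcing $\zf{R}=(\pi(x))$ and then, I claim, $\f{m}=0$ by counting — see below) or $\deg g^{*}<q$, in which case $g^{*}$, being monic of degree $<q$ and a zero-function, already forces a contradiction because a nonzero polynomial of degree $<q$ cannot vanish on all of $R$ when $R$ has more than... no, $R$ may be infinite as a set only if $\f{m}\neq0$, but then $\f{m}^{e-1}$ is a nonzero ideal and $a\,\pi(x)$ type elements of $\zf{R}$ with $a$ a zero-divisor are not multiples of a monic $g^{*}$ of positive degree unless $\deg g^{*}=q$ exactly, which recovers (3). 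So the real crux reduces to: \emph{if $\zf{R}=(\pi(x))$ then $R$ is a field}, and dually \emph{if $\zf{\f{m}}=(x)$ then $R$ is a field}. I would prove the latter: $x^{e}\in\zf{\f{m}}=(x)$ is automatic, but more to the point, take $0\neq m\in\f{m}$; then the constant polynomial... the cleanest route is that $\zf{\f{m}}$ contains $m^{e-1}x^{0}$? No — rather, $(x,\f{m})^{e}\subseteq\zf{\f{m}}$ always, so if $\f{m}\neq 0$ then $\f{m}^{e-1}\subseteq\zf{\f{m}}$ contains a nonzero constant, which cannot lie in $(x)$. That is the whole point, and it is clean.

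The main obstacle, then, is not any single implication but organizing the regular-polynomial/degree bookkeeping so that ``principal'' collapses to ``principal generated by a monic polynomial of degree exactly $q$ (resp.\ $1$),'' and then deriving $\f{m}=0$ from the presence of nonzero constants or small-degree zero-divisor multiples in the ideal. I expect to invoke Theorem~\ref{T:McDonaldAssociate} exactly once in each of the two hard directions to replace an arbitrary generator by a monic one, then argue by degrees using $\pi(x)\in\zf{R}$ (resp.\ $x\in\zf{\f{m}}$) as the ``smallest monic'' element, and finally use $(x,\f{m})^{e}\subseteq\zf{\f{m}}$ and Lemma~\ref{L:Stuff} to exhibit a nonzero constant (or a non-monic, zero-divisor-scaled element) that cannot lie in a principal ideal generated by a monic polynomial of positive degree unless $R/\f{m}=R$.
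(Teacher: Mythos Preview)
There are two recurring errors that undermine the hard directions. First, \zf{\f{m}} never contains a nonzero constant: the constant polynomial $a$ evaluated at $0\in\f{m}$ is $a$, so $a\in\zf{\f{m}}$ forces $a=0$. Your inclusion $(x,\f{m})^{e}\subseteq\zf{\f{m}}$ is correct, but the constant part of $(x,\f{m})^{e}$ is $\f{m}^{e}=0$, not $\f{m}^{e-1}$; so the ``nonzero constant not lying in $(x)$'' that you want for (4)$\Rightarrow$(1) and (5)$\Rightarrow$(1) simply does not exist. The paper's (5)$\Rightarrow$(1) is instead a one-liner you are circling around but never land on: if $\zf{\f{m}}=(x)$ then in particular $x\in\zf{\f{m}}$, i.e.\ the identity vanishes on \f{m}, so $\f{m}=0$.

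Second, in the (2)$\Rightarrow$(3) direction you assume $\pi(x)\in\zf{R}$ (and analogously $x\in\zf{\f{m}}$) to bound the degree of a monic generator. But $\pi(x)\in\zf{R}$ means $\pi(R)=0$, and by Corollary~\ref{C:Onto} $\pi(R)=\f{m}$, so this already says $\f{m}=0$ --- precisely the conclusion, not a hypothesis you may use. The paper gets the degree bound differently: take $r\neq 0$ in the annihilator of \f{m}, so that $r\,\pi(x)\in\zf{R}$ is a degree-$q$ polynomial forcing the monic generator to have degree at most $q$; the lower bound comes from Proposition~\ref{P:Intersection}, which says every element of \zf{R} is a multiple of $\pi(x)$. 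Having established $\zf{R}=(\pi(x))$, the step (3)$\Rightarrow$(1) then uses Corollary~\ref{C:Onto} directly: $0=\pi(R)=\f{m}$. Your outline invokes McDonald's theorem and the socle element in roughly the right places, but without Proposition~\ref{P:Intersection} and Corollary~\ref{C:Onto} the argument does not close.
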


\begin{proof}
  If $R$ is a field then $R[x]$ is a principal ideal domain, so \zf{R} is
  principal.

  Assume \zf{R} is principal.  Since $\pi(R) \sse \f{m}$ and $\f{m}^{e} = 0$ for
  some $e \geq 1$, \zf{R} contains regular polynomials, such as $\pi(x)^{e}$;
  thus, the generator of \zf{R} must be regular.  According to
  Theorem~\ref{T:McDonaldAssociate}, we may assume the generator is monic:
  $\zf{R} = (f(x))$ for some monic polynomial in $R[x]$.  Let $r$ be a nonzero
  element in the annihilator of \f{m}, so that, by Lemma~\ref{L:Stuff}, $r
  \pi(x)$ is a polynomial of degree $q$ in $\zf{R} = (f(x))$.  This forces
  $f(x)$ to have degree at most $q$, and since we know that all polynomials in
  \zf{R} are multiples of $\pi(x)$ (Proposition~\ref{P:Intersection}), the
  degree of $f(x)$ is exactly $q$.  Since $f(x)$ is a monic multiple of $\pi(x)$
  with the same degree, $f(x) = \pi(x)$.

  If $\zf{R} = (\pi(x))$, then according to Corollary~\ref{C:Onto}, $0 = \pi(R)
  = \f{m}$.  From this we easily conclude that $\zf{\f{m}} = (x)$, and thus
  $\zf{\f{m}}$ is principal.

  If $\zf{\f{m}}$ is principal, we use an argument similar to the part where we
  assumed \zf{R} is principal.  Since \zf{\f{m}} contains $x^{e}$ for some $e
  \geq 1$, \zf{\f{m}} contains regular elements, so the generator of \zf{\f{m}}
  is regular, and we may assume it is monic:  $\zf{\f{m}} = (f(x))$ for some
  monic $f(x) \in R[x]$.  Let $r$ be a nonzero element in the annihilator of
  \f{m}, so that $r x \in \zf{\f{m}}$.  This forces $f(x)$ to
  have degree at most 1.  Since $f(x)$ is monic and $f(0) = 0$, $f(x) = x$, as
  desired.

  If $\zf{\f{m}} = x$, then $\f{m} = 0$, so $R$ is a field.
\end{proof}

In the next proposition it becomes clear how the conditions of being Artinian or
having a finite residue field affect \zf{R} and \zf{\f{m}}.  We will use the
concept of the \emph{embedding dimension} of $R$, denoted $\Edim R$; this is the
minimal number of generators of the maximal ideal.  Recall that $\depth R \leq
\Dim R \leq \Edim R$.

\begin{theorem}\label{T:Nonzero}
  Let $(R, \f{m})$ be a Noetherian local ring.
  \begin{enumerate}
  \item\label{T:NonzeroZfm} $\zf{\f{m}}$ contains nonzero polynomials if and
    only if $\depth R = 0$.
  \item\label{T:NonzeroZfR} $\zf{R}$ contains nonzero polynomials if and only if
    $\depth R = 0$ and \ol{R} is finite.
  \item \zf{\f{m}} contains regular polynomials if and only if $\Dim R = 0$.
  \item \zf{R} contains regular polynomials if and only if $\Dim R = 0$ and
    \ol{R} is finite.
  \item \zf{\f{m}} is generated by a regular polynomial if and only if
    $\Edim R = 0$.
  \item \zf{R} is generated by a regular polynomial if and only if $\Edim R = 0$
    and \ol{R} is finite.
  \end{enumerate}
\end{theorem}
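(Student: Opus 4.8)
The plan is to isolate two recurring mechanisms and then push all six parts through them.

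\emph{Mechanism A (forcing vanishing).} By Lemma~\ref{L:Factor} together with a degree count, a nonzero $f \in R[x]$ of degree $d$ cannot vanish on a set containing $d+1$ elements whose pairwise differences are each regular or a unit, since the product of the corresponding $d+1$ monic linear factors would divide $f$ while having degree $d+1 > d$. I would apply this in two ways. If $\depth R > 0$, pick a regular element $t \in \f{m}$; its powers $t, t^{2}, t^{3}, \dots$ are pairwise distinct (from $t^{i} = t^{j}$ with $i<j$ one gets $t^{i}(1 - t^{j-i}) = 0$, and $1 - t^{j-i}$ is a unit) and their differences $t^{i} - t^{j} = t^{j}(t^{i-j} - 1)$ are regular, so $\zf{\f{m}} = 0$. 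If $\ol{R}$ is infinite, lifting $d+1$ distinct residues gives $d+1$ elements of $R$ with pairwise-unit differences, so $\zf{R} = 0$. \emph{Mechanism B (producing elements).} If $\depth R = 0$ there is a nonzero $r \in R$ with $r\f{m} = 0$ (equivalently $\f{m} \in \Ass R$), whence $rx \in \zf{\f{m}}$, and $r\pi(x) \in \zf{R}$ whenever $\ol{R}$ is finite, using $\pi(R) \sse \f{m}$ from Lemma~\ref{L:Stuff}; if in addition $\f{m}^{e} = 0$ (i.e.\ $\Dim R = 0$), these may be replaced by the monic, hence regular, members $x^{e} \in \zf{\f{m}}$ and $\pi(x)^{e} \in \zf{R}$.

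Parts (1) and (2) then fall out at once: Mechanism B handles $\depth R = 0$ and Mechanism A handles $\depth R > 0$ for (1), while for (2) one notes $\zf{R} \sse \zf{\f{m}}$, so a nonzero $f \in \zf{R}$ forces $\depth R = 0$ by part (1) and forces $\ol{R}$ finite by Mechanism A with $S = R$, and conversely $r\pi(x)$ is a nonzero member of $\zf{R}$ when both conditions hold. For (3) and (4) the $\Leftarrow$ directions are witnessed by $x^{e}$ and $\pi(x)^{e}$, and, again using $\zf{R}\sse\zf{\f{m}}$, the forward direction of (4) reduces to that of (3) (giving $\Dim R = 0$) plus that of (2) (giving $\ol{R}$ finite, a regular polynomial being nonzero).

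The hard part is the forward direction of (3): a regular polynomial in $\zf{\f{m}}$ gives only $\depth R = 0$ for free, whereas I need the stronger $\Dim R = 0$. Suppose $\zf{\f{m}}$ contains a regular $f$ but $\Dim R > 0$. Choose a minimal prime $\f{p}$ of $R$ with $\Dim(R/\f{p}) > 0$, so $R/\f{p}$ is a local domain with nonzero maximal ideal $\f{m}/\f{p}$, hence $\depth(R/\f{p}) > 0$. The image of $f$ in $(R/\f{p})[x]$ vanishes on $\f{m}/\f{p}$, so by part (1) applied to the ring $R/\f{p}$ that image is zero, i.e.\ every coefficient of $f$ lies in $\f{p}$. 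But a minimal prime of a Noetherian ring lies in $\Ass R$, so $\f{p} = \operatorname{Ann}_{R}(r)$ for some $r \neq 0$; then $r$ annihilates $\f{p}$, hence annihilates every coefficient of $f$, hence $rf = 0$, contradicting regularity of $f$. I expect this to be the main obstacle, since it is the only place where the gap between depth and dimension has to be closed.

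Finally, parts (5) and (6). The $\Leftarrow$ directions are immediate: $\Edim R = 0$ means $\f{m} = 0$, so $R$ is a field, $\zf{\f{m}} = (x)$, and (if also $\ol{R}$ is finite) $\zf{R} = (x^{q} - x)$ by Theorem~\ref{T:LikeGilmer}, each generated by a monic polynomial. For the forward direction of (5): the regular generator makes $\zf{\f{m}}$ contain a regular polynomial, so $\Dim R = 0$ by part (3), hence $R$ is Artinian local with $\depth R = 0$; by Theorem~\ref{T:McDonaldAssociate} I may take the generator $f$ monic; picking $0 \neq r$ with $r\f{m} = 0$, the nonzero degree-one polynomial $rx$ lies in $(f)$, which with $f$ monic forces $\deg f \le 1$, and since the constant polynomial $1$ is not in $\zf{\f{m}}$ while $f \in \zf{\f{m}}$ has $f(0)=0$, we get $f = x$; then $\zf{\f{m}} = (x)$ yields $\f{m} = 0$, i.e.\ $\Edim R = 0$. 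For (6): the regular generator makes $\zf{R}$ contain a regular polynomial, so by part (4) $\Dim R = 0$ and $\ol{R}$ is finite, i.e.\ $R$ is a finite ring (Artinian with finite residue field); then $\zf{R}$ is principal, so $R$ is a field by Theorem~\ref{T:LikeGilmer}, whence $\f{m} = 0$ and $\Edim R = 0$ (with $\ol{R}$ finite already in hand). Alternatively one can argue as in (5), using $r\pi(x)$ and Proposition~\ref{P:Intersection} to pin $f$ down as a $\pi$-polynomial and then Corollary~\ref{C:Onto} to conclude $\f{m} = \pi(R) = 0$.
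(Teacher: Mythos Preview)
Your proof is correct and follows the paper's overall strategy, with a few tactical differences worth recording. For the $\depth R > 0$ direction of (1) the paper sets up a Vandermonde system in the powers $t, t^{2}, \dots, t^{n+1}$ and inverts it; you instead feed those same powers into Lemma~\ref{L:Factor} and argue by degree, which is exactly the device the paper uses only for (2)---your version unifies the two cases under a single ``too many roots with good differences'' mechanism. You also exploit the containment $\zf{R} \subseteq \zf{\f{m}}$ to reduce the forward directions of (2) and (4) to those of (1) and (3), where the paper repeats the minimal-prime argument separately for $\zf{R}$. In (3), the paper shows the coefficients of $f$ lie in \emph{every} minimal prime and hence are nilpotent; you pick a \emph{single} minimal prime $\f{p}$, use that minimal primes are associated in a Noetherian ring to write $\f{p} = \ann{R}{r}$, and conclude $rf = 0$. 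Both work; yours needs slightly less (one prime instead of all) at the cost of invoking the $\text{Min} \subseteq \Ass$ fact. Parts (5) and (6) match the paper's argument essentially verbatim.
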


In order to make the similarity with the other parts more clear, parts (4), (5),
and (6) of the previous proposition were not stated as concisely as possible.
Before we present the proof, we state and prove a corollary to clarify those
three parts.  In the development of this paper, we were particularly interested
in the monic polynomials in \zf{R}, so this corollary explains why we were
mainly focused on finite rings.

\begin{corollary}\label{C:Nonzero}
  Let  $(R, \f{m})$ be a Noetherian local ring.
  \begin{enumerate}
    \setcounter{enumi}{2}
  \item \zf{\f{m}} contains regular polynomials if and only if $R$ is Artinian.
  \item \zf{R} contains regular polynomials if and only if $R$ is a
    finite ring.
  \item \zf{\f{m}} is generated by a regular polynomial if and only if $R$ is a
    field.
  \item \zf{R} is generated by a regular polynomial if and only if $R$ is a
    finite field.
  \end{enumerate}
\end{corollary}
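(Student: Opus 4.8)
The plan is to deduce the corollary directly from parts (3)--(6) of Theorem~\ref{T:Nonzero} by translating the dimension-theoretic hypotheses into ring-theoretic ones. First I would recall the standard fact that a Noetherian ring has Krull dimension zero exactly when it is Artinian; applied to $R$, this says $\Dim R = 0$ is equivalent to $R$ being Artinian, which turns Theorem~\ref{T:Nonzero}(3) into part (3) of the corollary. Likewise, $\Edim R = 0$ means that \f{m} is generated by the empty set, i.e.\ $\f{m} = 0$, which is precisely the condition that $R$ be a field; this turns Theorem~\ref{T:Nonzero}(5) into part (5) of the corollary. (As a sanity check, $\Edim R = 0$ forces $\Dim R = 0$ via $\depth R \le \Dim R \le \Edim R$, so a field is in particular Artinian, as it must be.)

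For the two statements about \zf{R}, the remaining work is to show that, for a Noetherian local ring $(R, \f{m})$, being Artinian with finite residue field is equivalent to being a finite ring, and that being a field with finite residue field is equivalent to being a finite field. The second equivalence is immediate: if $R$ is a field then $\ol{R} = R/\f{m} = R$, so ``$\ol{R}$ finite'' just says ``$R$ finite'', and combining this with part (5) of the corollary yields part (6). For the first equivalence, one direction is trivial, since a finite ring is Artinian and its residue field, being a quotient, is finite. For the converse I would use the \f{m}-adic filtration $R \supseteq \f{m} \supseteq \f{m}^{2} \supseteq \cdots \supseteq \f{m}^{e} = 0$, which is finite because \f{m} is nilpotent when $R$ is Artinian, observe that each successive quotient $\f{m}^{i}/\f{m}^{i+1}$ is a finitely generated vector space (as $R$ is Noetherian) over the finite field $\ol{R}$ and hence finite, and conclude that $R$ itself is finite. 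Feeding this into Theorem~\ref{T:Nonzero}(4) gives part (4) of the corollary, and feeding the field case into Theorem~\ref{T:Nonzero}(6) gives part (6).

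There is essentially no hard step here: the dimension-to-Artinian and embedding-dimension-to-field translations are textbook, and the only point that needs an argument rather than a citation is the passage from ``Artinian local with finite residue field'' to ``finite ring'', which is handled by the short filtration argument above. The main thing to be careful about is simply keeping straight which hypothesis ($\Dim R = 0$ versus $\Edim R = 0$) corresponds to which conclusion, and remembering that for a field the residue field is the ring itself, so that the ``$\ol{R}$ finite'' clause collapses to ``$R$ finite'' in parts (5)--(6).
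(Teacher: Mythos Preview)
Your proposal is correct and follows essentially the same approach as the paper's proof: translate the dimension-theoretic conditions in Theorem~\ref{T:Nonzero} into the corresponding ring-theoretic ones. The paper's argument is terser, simply asserting that a zero-dimensional Noetherian local ring has finite residue field if and only if it is finite, whereas you spell out the filtration argument; but the logic is the same.
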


\begin{proof}[Proof of Corollary~\ref{C:Nonzero}]
  Part (3) is clear since a Noetherian local ring is Artinian if and only if it
  has dimension 0.  Part (4) follows from the proposition since a zero
  dimensional Noetherian local ring has a finite residue field if and only if
  the ring is finite.  Parts (5) and (6) follow since $\Edim R = 0$ if and only
  if $R$ is a field.
\end{proof}

\begin{proof}[Proof of Theorem~\ref{T:Nonzero}]
  (1): If $R$ has depth 0 then since $R$ is Noetherian, there is a nonzero
  element $m$ that annihilates \f{m}; thus $mx \in \zf{\f{m}}$.

  If $R$ does not have depth 0, then $R$ contains a regular element $t$.  Let
  $g(x) = g_{0} + g_{1} x + \cdots + g_{n} x^{n} \in \zf{\f{m}}$.  Since $g(t) =
  g(t^{2}) = g(t^{3}) = \cdots = g(t^{n + 1}) = 0$, there is a matrix equation
  \[
  \begin{bmatrix}
    1 & t^{1} & t^{2} & \cdots & t^{n}\\
    1 & t^{2} & t^{4} & \cdots & t^{2n}\\
    \vdots & \vdots & \vdots & \ddots & \vdots\\
    1 & t^{n + 1} & t^{(n + 1)2} & \cdots & t^{(n + 1)n}\\
  \end{bmatrix}
  \begin{bmatrix}
    g_{0}\\
    g_{1}\\
    \vdots\\
    g_{n}
  \end{bmatrix}
  =
  \begin{bmatrix}
    0\\
    0\\
    \vdots\\
    0
  \end{bmatrix}
  \]
  The determinant of this Vandermonde matrix is
  \[
  \prod_{1 \leq i < j \leq n + 1} (t^{j} - t^{i}) = \prod_{1 \leq i < j \leq n +
    1} t^{i}(t^{j - i} - 1).
  \]
  Since each $t^{j - i} - 1$ is a unit, this determinant is an associate of a
  power of $t$; namely $t^{k}$ where $k = n(n + 1)(n + 2)/6$.  After multiplying
  both sides of the matrix equation by the adjugate and dividing by the unit, we
  find that $t^{k} g_{i} = 0$ for each $i$.  Since $t^{k}$ is regular, the
  polynomial $g(x)$ is zero, as desired.

  (2):  Assume $\ol{R} = \{\ol{c_{1}}, \ldots, \ol{c_{q}}\}$ and $R$ has depth
  0.  Since $R$ is Noetherian with depth 0, there is a nonzero element $m$ that
  annihilates \f{m}.  By Lemma~\ref{L:Stuff}, the polynomial $\pi(x) = \prod_{i
    = 1}^{q} (x - c_{i})$ is in \zf{R,\f{m}}, so the polynomial $m \pi(x)$ is a
  nonzero element of \zf{R}.

  For the converse, if \ol{R} is not finite, then there is an infinite sequence
  of elements $\{c_{n}\}_{n \geq 1}$ such that no two come from the same residue
  class of \f{m}; thus each difference $c_{j} - c_{i}$ ($j \neq i$) is a unit.
  On the other hand, if $R$ does not have depth 0, then there is a regular
  element $t \in \f{m}$.  Consider the sequence $\{t^{n}\}_{n \geq 1}$; for $i >
  j$ we have $t^{i} - t^{j} = t^{j}(t^{i - j} - 1)$.  Since $t^{j}$ is regular
  and $t^{i - j} - 1$ is a unit, each difference $t^{i} - t^{j}$ ($i \neq j$) is
  regular.

  In either case, we may apply Lemma~\ref{L:Factor} to conclude that any nonzero
  polynomial in the zero-function ideal has arbitrarily high degree.  This is a
  contradiction, so the zero-function ideal contains only the zero polynomial,
  as desired.

  (3):  If $R$ has dimension 0 then $\f{m}^{e} = 0$ for some $e \geq 1$.  Thus
  $x^{e} \in \zf{\f{m}}$.

  Conversely, if $R$ has positive dimension, then for any minimal prime ideal
  \f{p} of $R$, $R/\f{p}$ is an integral domain of positive dimension; in
  particular, it has positive depth.  According to Part (1), \zf{\f{m}/\f{p}} is
  the zero ideal of of $R/\f{p}$.  The image of any $f(x) \in \zf{\f{m}}$ in
  $(R/\f{p})[x]$ is in \zf{\f{m}/\f{p}}, and thus $f(x) \in \f{p}[x]$.  Since
  the coefficients of $f(x)$ are in every minimal prime, they are all nilpotent,
  so $f(x)$ is not regular.

  (4):  Suppose $\ol{R}$ is finite and $\Dim R = 0$.  Let $e$ be such that
  $\f{m}^{e} = 0$ and let $\ol{R} = \{\ol{c_{1}}, \ldots, \ol{c_{q}}\}$.  By
  Lemma~\ref{L:Stuff}, the polynomial $\pi(x) = \prod_{i = 1}^{q} (x - c_{i})$
  is in \zf{R, \f{m}}, so the regular polynomial $\pi(x)^{e}$ is in \zf{R}.

  Conversely, suppose either \ol{R} is infinite or $\Dim R \geq 1$.  If \ol{R}
  is infinite, then \zf{R} does not contain regular polynomials since it is the
  zero ideal, by Part (2).  If $\Dim R \geq 1$, then for every minimal prime
  ideal \f{p} of $R$, $R/\f{p}$ is an integral domain of positive dimension; in
  particular, it has positive depth.  According to Part (2), \zf{R/\f{p}} is the
  zero ideal of of $R/\f{p}$.  The image of any $f(x) \in \zf{R}$ in
  $(R/\f{p})[x]$ is in \zf{R/\f{p}}, and thus $f(x) \in \f{p}[x]$.  This shows
  that $f(x)$ cannot be regular.

  (5):  If $\Edim R = 0$ then $R$ is a field, so $\f{m} = 0$ and $\zf{\f{m}} =
  (x)$.

  Conversely, assume $\zf{\f{m}} = (f(x))$ for some regular polynomial $f(x) \in
  R[x]$.  By Part (3), $\Dim R = 0$, so $\f{m}^{e} = 0$ for some $e \geq 1$.
  Due to this and the result from McDonald (our
  Theorem~\ref{T:McDonaldAssociate}), we may assume $f(x)$ is monic.  By Part
  (1), $\depth R = 0$, so there is an element $m \in \f{m}$ with $mx \in
  \zf{\f{m}}$.  This shows that the monic polynomial $f(x)$ must have degree 1;
  the only choice is $f(x) = x$.  From this we see that $\f{m} = 0$ so $R$ is a
  field.

  (6):  If $\Edim R = 0$ and $\ol{R}$ is finite, then $R$ is a finite field, so
  $\zf{R} = (\pi(x))$ for any $\pi$-polynomial, by Lemma~\ref{L:Stuff}.

  Conversely, assume $\zf{R} = (f(x))$ for some regular polynomial $f(x)$.  From
  Part (4) we know that $\Dim R = 0$ and \ol{R} is finite; this implies that $R$
  is finite.  By Theorem~\ref{T:LikeGilmer}, $R$ is a finite field, so $\Edim R
  = 0$.
\end{proof}

In the following example we illustrate the use of this theorem and contrast the
behavior of the zero-function ideals over rings with finite and infinite residue
fields.

\begin{example}
  The ring $R = \ZZ_{2}\llbracket S,T \rrbracket/(S^{2}, ST)$ is a complete
  Noetherian local ring with depth zero, dimension one, and a finite residue
  field with $q = 2$ elements; let $s$ and $t$ be the images of $S$ and $T$ in
  $R$.  According to Theorem~\ref{T:Nonzero}, \zf{\f{m}} is nonzero but does not
  contain regular polynomials, and the same goes for \zf{R}.  We argue that
  $\zf{\f{m}} = (sx)$ and conclude that $\zf{R} = (s(x^{2} - x))$ by applying
  Theorem~\ref{T:generators}.  Certainly $\zf{\f{m}} \supseteq (sx)$, since the
  annihilator of the maximal ideal of $R$ is $sR$.  For the opposite
  containment, let $\tilde{R} = R/sR \cong \ZZ_{2}\llbracket T \rrbracket$, a
  local Noetherian ring of positive depth.  If $f(x) \in \zf{\f{m}}$, then
  $\tilde{f} \in \zf{\tilde{\f{m}}}$, where $\tilde{\f{m}}$ and $\tilde{f}$ are
  the images of $\f{m}$ and $f$ in $\tilde{R}$ and $\tilde{R}[x]$.  By
  Theorem~\ref{T:Nonzero}~(\ref{T:NonzeroZfm}), $\zf{\tilde{\f{m}}} = 0$, so we
  conclude that $f \in (s)$.  Since $f(0) = 0$, $f \in (s) \cap (x) = (sx)$, as
  desired.

  If we switch to an infinite coefficient ring and residue field, say, $\QQ$
  instead of $\ZZ_{2}$, we still have $\zf{\f{m}} = (sx)$ (nonzero but
  containing no regular polynomials).  However, Theorem~\ref{T:generators} does
  not apply (for one thing, $\pi$-polynomials don't exist).  In fact,
  Theorem~\ref{T:Nonzero}~(\ref{T:NonzeroZfR}) guarantees $\zf{R} = 0$ instead
  of $\zf{R} = (s(x^{2} - x))$.
\end{example}

\section{Obtaining \zf{R} from \zf{m}; Applications}

The following proposition is the key to our main result,
Theorem~\ref{T:generators}.

\begin{proposition}\label{P:composition}
  Let $(R, \f{m})$ be a Henselian local ring with finite residue field \ol{R}
  and let $\pi(x)$ be an arbitrary $\pi$-polynomial.  Any $f(x) \in \zf{R}$ may
  be written in the form
  \[
  f(x) = p_{0}(\pi(x)) + x p_{1}(\pi(x)) + x^{2} p_{2}(\pi(x)) + \cdots + x^{q -
    1} p_{q - 1}(\pi(x))
  \]
  with each $p_{i}(x) \in \zf{\f{m}}$.
\end{proposition}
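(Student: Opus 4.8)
The plan is to use the fact that $\{1, x, x^2, \ldots, x^{q-1}\}$ together with powers of $\pi(x)$ gives a kind of basis for $R[x]$, then peel off the coefficients one residue class at a time. First I would observe that since $\pi(x)$ is monic of degree $q$, every $f(x) \in R[x]$ has a unique ``$\pi$-adic expansion'' $f(x) = \sum_{i=0}^{q-1} x^i\, p_i(\pi(x))$ with $p_i(x) \in R[x]$: this is just repeated division with remainder by $\pi(x)$, collecting the degree-$<q$ remainders. So existence and uniqueness of the \emph{shape} of the expansion is automatic; the entire content of the proposition is that when $f \in \zf{R}$, each $p_i$ lands in $\zf{\f{m}}$.

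The key step is to evaluate the expansion at translates of $\f{m}$ and exploit Corollary~\ref{C:Onto}. Fix $m \in \f{m}$ and a coset representative $c_j$; I want to show $p_i(m) = 0$ for all $i$, which (as $m$ ranges over $\f{m}$) is exactly $p_i \in \zf{\f{m}}$. The idea: since $R$ is Henselian, $\pi(x) - \pi(c_j + m)$ is again a $\pi$-polynomial by Theorem~\ref{T:EquivalenceOfPi}, hence by Corollary~\ref{C:Onto} its roots form a full set of coset representatives; in particular there is, for each residue class, an element $r$ with $\pi(r) = \pi(c_j + m)$, and these $q$ elements $r$ are pairwise incongruent mod $\f{m}$. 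Now plug each such $r$ into $f$: because $f \in \zf{R}$ we get $0 = f(r) = \sum_{i=0}^{q-1} r^i\, p_i(\pi(c_j+m))$ for all $q$ of these $r$. That is a single linear system $V \mathbf{p} = 0$ where $V$ is the $q \times q$ matrix with rows $(1, r, r^2, \ldots, r^{q-1})$ and $\mathbf{p} = (p_0(\pi(c_j+m)), \ldots, p_{q-1}(\pi(c_j+m)))^{T}$. Since the $r$'s are pairwise incongruent mod $\f{m}$, every difference $r - r'$ is a unit, so $\det V = \prod(r - r')$ is a unit; hence $\mathbf{p} = 0$, i.e. $p_i(\pi(c_j+m)) = 0$ for every $i$. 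Finally, as $c_j$ runs over all representatives and $m$ over $\f{m}$, the value $\pi(c_j + m)$ runs over all of $\f{m}$ by Corollary~\ref{C:Onto} (indeed $\pi(\f{c}) = \f{m}$ for each coset $\f{c}$), so $p_i$ vanishes on all of $\f{m}$, giving $p_i \in \zf{\f{m}}$.

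The main obstacle is making the Vandermonde argument airtight over the (non-domain) ring $R$: one must be careful that the $q$ chosen roots $r$ of $\pi(x) - \pi(c_j+m)$ genuinely realize all $q$ residue classes (so that pairwise differences are units and $\det V$ is a unit, not merely regular), and that solving $V\mathbf p = 0$ by multiplying by the adjugate and the inverse of the unit determinant is legitimate. This is exactly what Theorem~\ref{T:EquivalenceOfPi} plus Corollary~\ref{C:Onto} deliver: $\pi(x) - \pi(c_j+m)$ factors completely over $R$ with its roots forming a set of residue-class representatives. A secondary, more bookkeeping-style point is to confirm that $\pi(c_j+m)$ sweeps out all of $\f{m}$ as $(c_j,m)$ varies — but again $\pi(\f{c}) = \f{m}$ from Corollary~\ref{C:Onto} handles this cleanly, so no genuine difficulty remains beyond assembling these pieces.
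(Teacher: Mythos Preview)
Your argument is correct and matches the paper's approach: both obtain the $\pi$-adic expansion (the paper does it by dividing $f(x)$ by $\pi(x)-y$ in $R[x,y]$ and then substituting $y=\pi(x)$, but your repeated-division version is equivalent) and then invert a Vandermonde system built from a full set of coset representatives sharing a common $\pi$-value, supplied by Corollary~\ref{C:Onto}. Your detour through $\pi(c_j+m)$ is unnecessary---the paper fixes $m\in\f{m}$ directly and uses $\pi(\f{c})=\f{m}$ for each coset $\f{c}$ to produce representatives $c_1,\dots,c_q$ with $\pi(c_i)=m$, giving $p_i(m)=0$ immediately without the extra surjectivity pass at the end.
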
  

\begin{proof}
  Let $f(x) \in \zf{R}$.  In the polynomial ring $R[x,y] = R[y][x]$, $f$ may be
  divided by the monic polynomial $\pi(x) - y$, so that $f(x) = Q(x, y)(\pi(x) -
  y) + R(x,y)$ for some $Q(x, y), R(x, y) \in R[x,y]$ with $R(x,y) = 0$ or the
  degree of $R(x,y)$ with respect to $x$ is less than $q$.  Now set $y = p(x)$
  to obtain $f(x) = p_{0}(\pi(x)) + x p_{1}(\pi(x)) + x^{2} p_{2}(\pi(x)) +
  \cdots + x^{q - 1} p_{q - 1}(\pi(x))$ where the polynomials $p_{i}(y) \in
  R[y]$ are the coefficients of the powers of $x$ in $R(x, y)$.

  It remains to see that each $p_{i}(x) \in \zf{\f{m}}$. Let $m \in \f{m}$.
  Since (according to Corollary~\ref{C:Onto}) $\pi$ maps each coset of \f{m}
  onto \f{m}, there exists a set $c_{1}, c_{2}, \ldots, c_{q}$ of
  representatives of the residue classes of \f{m}, with $\pi(c_i) = m$ for each
  $c_{i}$; each difference $c_{i} - c_{j}$ ($i \neq j$) is a unit.  Since $f(x)
  \in \zf{R}$, we may evaluate $f(x)$ at $c_{i}$ for each $i$ from $1$ to $q$ to
  obtain
  \[
  0 = p_{0}(m) + c_{i} p_{1}(m) + c_{i}^{2} p_{2}(m) + \cdots + c_{i}^{q - 1}
  p_{q - 1}(m).
  \]
  In matrix form, this system becomes
  \[
  \begin{bmatrix}
    0\\
    0\\
    \vdots\\
    0
  \end{bmatrix}
  =
  \begin{bmatrix}
    1 & c_{1} & c_{1}^{2} & \cdots & c_{1}^{q - 1}\\
    1 & c_{2} & c_{2}^{2} & \cdots & c_{2}^{q - 1}\\
    \vdots & \vdots & \vdots & \ddots & \vdots\\
    1 & c_{q} & c_{q}^{2} & \cdots & c_{q}^{q - 1}\\
  \end{bmatrix}
  \begin{bmatrix}
    p_{0}(m)\\
    p_{1}(m)\\
    \vdots\\
    p_{q - 1}(m)
  \end{bmatrix}
  \]
  The matrix is a Vandermonde matrix, and its determinant is $\prod_{1 \leq i <
    j \leq q} (c_{j} - c_{i})$, which is a unit since it is a product of units.
  Thus, the matrix is invertible, so each $p_{i}(m) = 0$, as desired.
\end{proof}  

An application of the previous result, we come to our main theorem, which states
that generators for \zf{R} can be obtained by composing generators for
\zf{\f{m}} with any $\pi$-polynomial, which we could roughly describe by writing
$\zf{R} = \zf{\f{m}} \circ \zf{R, \f{m}}$, if one keeps in mind
Lemma~\ref{L:Stuff} which states that $\zf{R, \f{m}} = (\pi(x), \f{m})$.  An
obvious consequence of the next theorem is that the minimal number of generators
of \zf{R} is less than or equal to the minimal number of generators of
\zf{\f{m}}.

\begin{theorem}\label{T:generators}
  Suppose $(R, \f{m})$ is a Henselian local ring with finite residue field
  $\ol{R}$ of cardinality $q$ and let $\pi(x)$ be an arbitrary $\pi$-polynomial.
  If $\zf{\f{m}} = (F_{1}(x), \ldots, F_{n}(x))$ then $\zf{R} = (F_{1}(\pi(x)),
  \ldots, F_{n}(\pi(x)))$.
\end{theorem}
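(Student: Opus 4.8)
The plan is to prove the two inclusions separately, with the easy direction first. For the inclusion $(F_{1}(\pi(x)), \ldots, F_{n}(\pi(x))) \sse \zf{R}$, the point is just that each $F_{j}(\pi(x))$ is a zero-function on $R$: given $r \in R$, we have $\pi(r) \in \f{m}$ by Lemma~\ref{L:Stuff}, and then $F_{j}(\pi(r)) = 0$ because $F_{j}(x) \in \zf{\f{m}}$. Since $\zf{R}$ is an ideal of $R[x]$ containing each generator, it contains the whole ideal they generate. This is the ``$\zf{\f{m}} \circ \zf{R, \f{m}} \sse \zf{R}$'' direction alluded to in the introduction, and it requires no hypothesis beyond $\pi(R) \sse \f{m}$.

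For the reverse inclusion $\zf{R} \sse (F_{1}(\pi(x)), \ldots, F_{n}(\pi(x)))$, I would invoke Proposition~\ref{P:composition}: any $f(x) \in \zf{R}$ can be written as
\[
f(x) = p_{0}(\pi(x)) + x\, p_{1}(\pi(x)) + \cdots + x^{q-1} p_{q-1}(\pi(x))
\]
with each $p_{i}(x) \in \zf{\f{m}}$. Now for each $i$, since $\zf{\f{m}} = (F_{1}(x), \ldots, F_{n}(x))$, I can write $p_{i}(x) = \sum_{j=1}^{n} g_{ij}(x) F_{j}(x)$ for some $g_{ij}(x) \in R[x]$. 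Substituting $\pi(x)$ for $x$ gives $p_{i}(\pi(x)) = \sum_{j} g_{ij}(\pi(x)) F_{j}(\pi(x))$, so each $p_{i}(\pi(x))$ lies in the ideal $(F_{1}(\pi(x)), \ldots, F_{n}(\pi(x)))$. Multiplying by the polynomial $x^{i} \in R[x]$ keeps us in that ideal, and summing over $i$ shows $f(x)$ is in it as well. Here is where the Henselian hypothesis is actually used, since Proposition~\ref{P:composition} relies on Corollary~\ref{C:Onto} (surjectivity of $\pi$ on each coset), which in turn rests on Theorem~\ref{T:EquivalenceOfPi}, which needs Hensel's Lemma.

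The genuine content is thus entirely packaged in Proposition~\ref{P:composition}; the theorem itself is a short bookkeeping argument of the form ``rewrite the $p_{i}$ in terms of the generators, substitute $\pi(x)$, and collect.'' I do not anticipate a real obstacle, but the one point to state carefully is that substitution $x \mapsto \pi(x)$ is a ring homomorphism $R[x] \to R[x]$, so it carries an expression $\sum_{j} g_{ij}(x) F_{j}(x)$ to $\sum_{j} g_{ij}(\pi(x)) F_{j}(\pi(x))$ without any subtlety — in particular no issue of well-definedness, since we are just evaluating the universal property of the polynomial ring. I would also remark, as the paper already notes, that this immediately yields $\minimal{\zf{R}} \leq \minimal{\zf{\f{m}}}$, and that combined with Lemma~\ref{L:Stuff} it is the precise meaning of the informal slogan $\zf{R} = \zf{\f{m}} \circ \zf{R, \f{m}}$.
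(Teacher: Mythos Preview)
Your proposal is correct and follows essentially the same approach as the paper's own proof: both directions are handled identically, with the easy inclusion coming from $\pi(R) \sse \f{m}$ and the reverse inclusion obtained by invoking Proposition~\ref{P:composition}, writing each $p_{i}(x)$ as an $R[x]$-linear combination of the $F_{j}(x)$, and substituting $\pi(x)$. Your added remarks on where the Henselian hypothesis enters and on substitution being a ring homomorphism are accurate elaborations but do not change the argument.
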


\begin{proof}
  Since $\pi(R) \sse \f{m}$, certainly $\zf{R} \supseteq (F_{1}(\pi(x)), \ldots,
  F_{n}(\pi(x)))$.  Now let $f(x) \in \zf{R}$.  Use
  Proposition~\ref{P:composition} to write $f(x) = p_{0}(\pi(x)) + x
  p_{1}(\pi(x)) + x^{2} p_{2}(\pi(x)) + \cdots + x^{q - 1} p_{q - 1}(\pi(x))$
  with each $p_{i}(x) \in \zf{\f{m}}$.  Since each $p_{i}(x)$ is an
  $R[x]$-linear combination of $F_{1}(x), \ldots, F_{n}(x)$, each
  $p_{i}(\pi(x))$ is an $R[x]$-linear (actually $R[\pi(x)]$-linear) combination
  of $F_{1}(\pi(x)), \ldots, F_{n}(\pi(x))$.  Since $f(x)$ is an $R[x]$-linear
  combination of the $p_{i}(\pi(x))$, the proof is complete.
\end{proof}

\begin{remark}
  The equality $\zf{R} = \zf{\f{m}} \circ \zf{R, \f{m}}$ should not be taken too
  literally.  Certainly polynomials in \zf{\f{m}} composed with polynomials in
  \zf{R, \f{m}} are in \zf{R}, but it's not true that every polynomial in \zf{R}
  can be obtained in that way.  For example, $x(x^{2} - x) \in \zf{\ZZ_{2}}$,
  but since it's degree is not even, it does not equal $f(x^{2} - x)$ for any
  polynomial $f(x)$.
\end{remark}

As mentioned in the introduction, the theorem below is a version of results of
Dickson \cite[p.~22, Theorem 27]{D}, Bandini \cite[Theorem~2.1]{B}, and Lewis
\cite[Theorem II]{L}, adapted for \zf{\f{m}} rather than \zf{R}, and for finite
local rings rather than specific rings.  We then recover the results for \zf{R}
in Corollary~\ref{C:Dickson} as an application of our main theorem,
Theorem~\ref{T:generators}.

\begin{theorem}\label{T:Dickson}
  Let $(R, \f{m})$ be a finite local ring with principal maximal ideal
  $\f{m} = (m)$; set $q = \card{R/\f{m}}$.  Suppose $e$ is the index of
  nilpotency of \f{m}.  If $e \leq q$ then $\zf{\f{m}} = (x, m)^{e}$; if
  $e = q + 1$, then $\zf{\f{m}} = (x, m)^{e} + (x^{q} - m^{q - 1}x)$.
\end{theorem}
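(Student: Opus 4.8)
\emph{Proof plan.} Recall that a finite local ring with principal maximal ideal $\f{m}=(m)$ is a chain ring \cite{M}: its ideals are exactly $R=\f{m}^{0}\supsetneq\f{m}\supsetneq\cdots\supsetneq\f{m}^{e}=0$, and $\ann{R}{m^{j}}=\f{m}^{e-j}$ for $0\le j\le e$. From this one extracts the coefficient criterion used throughout: writing $f(x)=\sum_{i}a_{i}x^{i}$, we have $f\in(x,m)^{e}$ if and only if $a_{i}\in\f{m}^{e-i}$ for every $i<e$ (with no condition when $i\ge e$). The inclusion $(x,m)^{e}\sse\zf{\f{m}}$ holds for every $e$, since the generator $m^{e-j}x^{j}$ carries $a\in\f{m}$ to $m^{e-j}a^{j}\in\f{m}^{e-j}\f{m}^{j}=0$ for $j\ge1$ and to $m^{e}=0$ for $j=0$; and for the second statement one checks in addition that $x^{q}-m^{q-1}x\in\zf{\f{m}}$, because it sends $mr$ to $m^{q}(r^{q}-r)\in\f{m}^{q+1}=0$ (as $\ol{r}^{\,q}=\ol{r}$ in the field $\ol{R}$). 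So in both parts the content is the reverse inclusion.

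I would prove $\zf{\f{m}}\sse(x,m)^{e}$ for $e\le q$ by induction on $e$, with $e=1$ trivial ($R$ is a field and $\zf{\f{m}}=(x)=(x,m)^{1}$). For the step, fix $f(x)=\sum_{i}a_{i}x^{i}\in\zf{\f{m}}$ and put $b_{j}=a_{j}m^{j}$; since $b_{j}=0$ for $j\ge e$, the assertion $f\in(x,m)^{e}$ is equivalent to $b_{j}=0$ for all $j$. The first move is to force every $b_{j}$ into the socle $\f{m}^{e-1}$: passing to $\tilde{R}=R/\f{m}^{e-1}$, a chain ring of nilpotency index $e-1\le q$ with residue field $\ol{R}$, the image $\tilde{f}$ lies in $\zf{\tilde{\f{m}}}$, so the inductive hypothesis gives $\zf{\tilde{\f{m}}}=(x,\tilde{m})^{e-1}$ and hence $\tilde{f}\in(x,\tilde{m})^{e-1}$; by the coefficient criterion $a_{j}\in\f{m}^{e-1-j}$ for $j\le e-2$, so $b_{j}=a_{j}m^{j}\in\f{m}^{e-1}$ for $j\le e-2$, while $b_{j}\in\f{m}^{e-1}$ trivially for $j\ge e-1$. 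Now write $b_{j}=\beta_{j}m^{e-1}$, where $\beta_{j}$ is well defined in $\ol{R}$ because $\f{m}^{e-1}$ is a one-dimensional $\ol{R}$-vector space. Evaluating $f$ at $mc_{1},\ldots,mc_{q}$ for a set of residue representatives $c_{1},\ldots,c_{q}$, the relations $0=f(mc_{i})=m^{e-1}\sum_{j}\beta_{j}c_{i}^{j}$ together with $\ann{R}{m^{e-1}}=\f{m}$ show that $\sum_{j}\beta_{j}x^{j}\in\ol{R}[x]$ vanishes at all $q$ elements of $\ol{R}$; as its degree is at most $e-1\le q-1$, it must be the zero polynomial, so every $b_{j}=0$ and the induction goes through.

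For $e=q+1$ I would run the same machine one level up. With $f=\sum_{i}a_{i}x^{i}\in\zf{\f{m}}$ and $b_{j}=a_{j}m^{j}$ (so $b_{j}=0$ for $j\ge q+1$), reduction modulo $\f{m}^{q}$ lands in a chain ring of nilpotency index $q$, where the already-established first part applies and forces $a_{j}\in\f{m}^{q-j}$ for $j\le q-1$, hence $b_{j}\in\f{m}^{q}$ for all $j$; write $b_{j}=\beta_{j}m^{q}$. Evaluating at $mc_{1},\ldots,mc_{q}$ now shows $P(x)=\sum_{j}\beta_{j}x^{j}\in\ol{R}[x]$ vanishes on all of $\ol{R}$, and since $\deg P\le q$ we get $P=\gamma(x^{q}-x)$ for some scalar $\gamma\in\ol{R}$. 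Reading off coefficients, $\beta_{j}=0$ for $j\notin\{1,q\}$, $\beta_{q}=\gamma$, and $\beta_{1}=-\gamma$. Lifting $\gamma$ to $g\in R$ and translating back through $b_{j}=a_{j}m^{j}$ gives $a_{j}\in\f{m}^{q+1-j}$ for $j\notin\{1,q\}$, $a_{q}-g\in\f{m}$, and $a_{1}+gm^{q-1}\in\f{m}^{q}$. These are precisely the coefficient conditions saying $f-g(x^{q}-m^{q-1}x)\in(x,m)^{q+1}$, so $f\in(x,m)^{e}+(x^{q}-m^{q-1}x)$, which together with the reverse inclusion noted above finishes the proof.

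The one genuinely substantive step --- and the only place the hypothesis on $e$ enters --- is the reduction that crowds all the $b_{j}$ into the one-dimensional socle $\f{m}^{e-1}$; that is what makes it legitimate to cancel $m^{e-1}$ and pass to the residue field, where the elementary bound on the number of roots of a polynomial over $\ol{R}$ closes the argument, with $e-1<q$ (respectively $e=q+1$) becoming exactly the degree bound $\deg<q$ (respectively $\deg\le q$). That reduction is handed to us by the induction on $e$ in the first part, and by the first part itself in the case $e=q+1$; everything else is the bookkeeping of matching $m$-adic valuations of coefficients against membership in $(x,m)^{e}$.
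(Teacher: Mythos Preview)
Your proof is correct and follows essentially the same approach as the paper: induction on $e$, reduction modulo $\f{m}^{e-1}$ to invoke the inductive hypothesis, evaluation at elements of $\f{m}$, cancellation of $m^{e-1}$ via $\ann{R}{m^{e-1}}=\f{m}$, and a degree bound over the residue field. The only cosmetic difference is packaging: the paper writes $f=\sum_{k=0}^{e-1}x^{k}m^{e-1-k}f_{k}(x)$ and shows $f_{k}(0)\in\f{m}$, whereas you work directly with the coefficient criterion $a_{i}\in\f{m}^{e-i}$ and the quantities $b_{j}=a_{j}m^{j}$ --- but your $\beta_{j}$ and the paper's $\ol{f_{j}(0)}$ are literally the same residue-field polynomial, so the arguments coincide.
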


\begin{proof}
  We prove the first result using induction on $e$.  The base case $e = 1$ is
  clear, since then $R$ is a field, $\f{m} = 0$, and $\zf{\f{m}} = (x)$.  Assume
  the result is true for rings whose maximal ideal has index of nilpotency $e -
  1 \leq q$; we prove the result for a ring whose maximal ideal has index of
  nilpotency $e \leq q$.  The containment $\supseteq$ is clear.  Let $f(x) \in
  \zf{\f{m}}$; then $\ol{f}(x) \in \zf{R/\f{m}^{e - 1}}$.  By induction,
  $\ol{f}(x) \in \ol{(x, m)^{e - 1}}$, and thus $f(x) \in (x, m)^{e - 1}$.  We
  have $f(x) = \sum_{k = 0}^{e - 1} x^{k} m^{e - 1 - k} f_{k}(x)$ for some
  $f_{k}(x) \in R[x]$; it remains to see that each $f_{k}(x) \in (x, m)$,
  i.e. that $f_{k}(0) \in \f{m}$.

  For each $r \in R$,
  \[
  0 = f(rm) = \sum_{k = 0}^{e - 1} (rm)^{k} m^{e - 1 - k} f_{k}(rm) = m^{e - 1}
  \sum_{k = 0}^{e - 1} r^{k} f_{k}(rm).
  \]
  Since the annihilator of $m^{e - 1}$ is \f{m}, $\sum_{k = 0}^{e - 1} r^{k}
  f_{k}(rm) \in \f{m}$, and thus $\sum_{k = 0}^{e - 1} r^{k} f_{k}(0) \in
  \f{m}$.  This shows that $\sum_{k = 0}^{e - 1} \ol{f_{k}(0)} x^{k} \in
  \zf{\ol{R}}$.  Since by Lemma~\ref{L:Stuff} $\zf{\ol{R}} = (x^{q} - x)$, this
  polynomial with degree less than $q$ must be the zero polynomial. Therefore
  each $f_{k}(0) \in \f{m}$, as desired.

  For the second result, the only part of the containment $\zf{\f{m}} \supseteq
  (x, m)^{e} + (x^{q} - m^{q - 1}x)$ that is not clear is $x^{q} - m^{q - 1}x
  \in \zf{\f{m}}$; for this, take any $rm \in \f{m}$ and compute $(rm)^{q} -
  m^{q - 1}(rm) = m^{q}(r^{q} - r) \in \f{m}^{q + 1} = 0$.  For the opposite
  containment, assume $f(x) \in \zf{\f{m}}$ and reduce module $\f{m}^{e - 1}$ as
  above to obtain a similar expression for $f(x)$, and again deduce that
  $\sum_{k = 0}^{e - 1} \ol{f_{k}}(0) x^{k} \in \zf{\ol{R}} = (x^{q} - x)$.
  Since $e - 1 = q$, we must have $\sum_{k = 0}^{e - 1} \ol{f_{k}}(0) x^{k} =
  \ol{u}(x^{q} - x)$ for some unit $u \in R$; thus each $\ol{f_{k}(0)} = 0$
  except for $\ol{f_{q}(0)} = \ol{u}$ and $\ol{f_{1}(0)} = -\ol{u}$ .  Define
  polynomials $g_{k}(x)$ identical to $f_{k}(x)$ except for $g_{q}(x) = f_{q}(x)
  - u$ and $g_{1}(x) = f_{1}(x) + u$.  Now the constant term of each $g_{k}(x)$
  is in \f{m} and we have
  \[
  f(x) = \sum_{k = 0}^{e - 1} x^{k} m^{e - 1 - k} f_{k}(x) = u(x^{q} - m^{q -
    1}x) + \sum_{k = 0}^{e - 1} x^{k} m^{e - 1 - k} g_{k}(x)
  \]
  so that $f(x) \in (x, m)^{e} + (x^{q} - m^{q - 1}x)$, as desired.
\end{proof}

The following corollary follows immediately from the theorem and
Theorem~\ref{T:generators}.

\begin{corollary}\label{C:Dickson}
  Let $(R, \f{m})$ be a finite local ring with principal maximal ideal $\f{m} =
  (m)$; set $q = \card{R/\f{m}}$.  Suppose $e$ is the index of nilpotency of
  \f{m}, and let $\pi(x)$ be any $\pi$-polynomial.  If $e \leq q$ then $\zf{R} =
  (\pi(x), m)^{e}$; if $e = q + 1$ then $\zf{R} = (\pi(x), m)^{e} + (\pi(x)^{q}
  - m^{q - 1}\pi(x))$.
\end{corollary}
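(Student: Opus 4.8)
The plan is to deduce the corollary directly from Theorem~\ref{T:Dickson}, which computes $\zf{\f{m}}$, and Theorem~\ref{T:generators}, which produces a generating set for $\zf{R}$ by composing a generating set of $\zf{\f{m}}$ with any $\pi$-polynomial. First I would observe that a finite local ring is Henselian and admits $\pi$-polynomials, so Theorem~\ref{T:generators} applies, and that the hypotheses of Theorem~\ref{T:Dickson} are exactly those of the corollary; thus no new work is needed beyond organizing the citations.

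For the case $e \le q$, Theorem~\ref{T:Dickson} gives $\zf{\f{m}} = (x,m)^{e}$, and I would record the explicit finite generating set $F_{i}(x) = x^{i}m^{e-i}$ for $0 \le i \le e$. Applying Theorem~\ref{T:generators} yields $\zf{R} = \bigl(F_{0}(\pi(x)), \ldots, F_{e}(\pi(x))\bigr) = \bigl(\pi(x)^{i}m^{e-i} : 0 \le i \le e\bigr)$, and the latter ideal is $(\pi(x),m)^{e}$ because the $e$-fold products of the two generators $\pi(x)$ and $m$ of the ideal $(\pi(x),m)$ are exactly the $\pi(x)^{i}m^{e-i}$. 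For the case $e = q+1$, Theorem~\ref{T:Dickson} gives $\zf{\f{m}} = (x,m)^{e} + (x^{q} - m^{q-1}x)$, which is generated by $\{x^{i}m^{e-i} : 0 \le i \le e\} \cup \{x^{q} - m^{q-1}x\}$; composing each of these generators with $\pi(x)$ and invoking Theorem~\ref{T:generators} gives $\zf{R} = (\pi(x),m)^{e} + (\pi(x)^{q} - m^{q-1}\pi(x))$, using the same identification of $(\pi(x),m)^{e}$ as above.

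The only step requiring any care — and hence the closest thing to an obstacle — is the bookkeeping that substituting $\pi(x)$ for $x$ in the monomial generators of $(x,m)^{e}$ produces a generating set of $(\pi(x),m)^{e}$: one must note that the constant generators $m^{j}$ are fixed under the substitution and still lie in $(\pi(x),m)^{e}$, and that a power of an ideal is generated by the corresponding products of its generators. Once this is spelled out, both assertions follow immediately from the two cited theorems with no further computation.
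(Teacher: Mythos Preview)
Your proposal is correct and follows exactly the paper's approach: the paper simply states that the corollary ``follows immediately from the theorem and Theorem~\ref{T:generators},'' and you have done precisely this, supplying the routine bookkeeping that composing the monomial generators $x^{i}m^{e-i}$ with $\pi(x)$ yields generators of $(\pi(x),m)^{e}$. Your observation that a finite local ring is Henselian (so that Theorem~\ref{T:generators} applies) is the only extra detail, and it is a sound one.
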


\section{Factoring $\pi$-Polynomials}


The following lemma is the heart of a more constructive approach
(Theorem~\ref{T:Limit}) to the converse part of the proof of
Theorem~\ref{T:EquivalenceOfPi}, which gave two equivalent conditions for
$\pi$-polynomials.  Note that according to this lemma,
if $\f{m}^{e} = 0$, then for any $r \in R$, the sequence $\{p_{n}(r)\}$
stabilizes at $n = e - 1$.

\begin{lemma}\label{L:Cauchy}
  Let $(R, \f{m})$ be a Noetherian local ring with finite residue field \ol{R}
  of cardinality $q$, and let $\pi(x)$ be any polynomial mapping to $x^{q} - x$
  in $\ol{R}[x]$.  Let $p_{0}(x) = x$ and $p_{n}(x) = \pi(p_{n - 1}(x)) + p_{n -
    1}(x)$, so that $p_{n}(x)$ denotes the function obtained by successively
  applying the function $\pi(x) + x$, $n$ times.  For every $n \geq 1$,
  \[
  p_{n}(x) - p_{n - 1}(x) \in \zf{R, \f{m}^{n}}.
  \]
\end{lemma}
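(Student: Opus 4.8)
The plan is to argue by induction on $n$. For the base case $n = 1$, we must show $p_{1}(x) - p_{0}(x) = \pi(x) \in \zf{R, \f{m}}$; but this is immediate from Lemma~\ref{L:Stuff}, since $\pi(x)$ maps to $x^{q} - x$ in $\ol{R}[x]$ and so lies in $(\pi'(x), \f{m})$ for any $\pi$-polynomial $\pi'(x)$ — or, more directly, the hypothesis says $\ol{p_{1}(x) - p_{0}(x)} = x^{q} - x \in \zf{\ol{R}}$, which is exactly the statement that $p_{1}(x) - p_{0}(x)$ maps $R$ into $\f{m}$.

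For the inductive step, assume $p_{n}(x) - p_{n - 1}(x) \in \zf{R, \f{m}^{n}}$, i.e. for every $r \in R$ we have $p_{n}(r) - p_{n - 1}(r) \in \f{m}^{n}$. I want to show $p_{n + 1}(r) - p_{n}(r) \in \f{m}^{n + 1}$. Write $a = p_{n - 1}(r)$ and $b = p_{n}(r)$, so $b - a \in \f{m}^{n}$ and I must control
\[
p_{n + 1}(r) - p_{n}(r) = \bigl(\pi(b) + b\bigr) - \bigl(\pi(a) + a\bigr) = \pi(b) - \pi(a) + (b - a).
\]
The key computation is a Taylor-type expansion: for any polynomial $g(x) \in R[x]$ and any $a, b \in R$, $g(b) - g(a) = (b - a)\bigl(g'(a) + (b - a)(\cdots)\bigr)$, so $g(b) - g(a) \equiv (b - a)\,g'(a) \pmod{(b - a)^{2}}$. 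Apply this with $g = \pi$. Since $b - a \in \f{m}^{n}$, we get $(b - a)^{2} \in \f{m}^{2n} \sse \f{m}^{n + 1}$ (because $n \geq 1$), so modulo $\f{m}^{n + 1}$,
\[
p_{n + 1}(r) - p_{n}(r) \equiv (b - a)\pi'(a) + (b - a) = (b - a)\bigl(\pi'(a) + 1\bigr).
\]
Now the crucial point is that $\pi'(a) + 1 \in \f{m}$ for every $a \in R$: since $\ol{\pi}(x) = x^{q} - x$ in $\ol{R}[x]$, its formal derivative is $\ol{\pi'}(x) = q x^{q - 1} - 1$, and because $q = \card{\ol{R}}$ is a power of the characteristic of the field $\ol{R}$, $q = 0$ in $\ol{R}$, so $\ol{\pi'}(x) = -1$ identically; hence $\ol{\pi'(a)} = -1$, i.e. $\pi'(a) + 1 \in \f{m}$. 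Combining, $(b - a)(\pi'(a) + 1) \in \f{m}^{n}\cdot\f{m} = \f{m}^{n + 1}$, which is what we wanted.

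The main obstacle — really the only subtle point — is the observation that $q \equiv 0$ in $\ol{R}$, which forces $\ol{\pi'} \equiv -1$; without this the factor $\pi'(a) + 1$ would not land in $\f{m}$ and the induction would stall with the exponent frozen at $n$ rather than advancing to $n + 1$. Everything else (the Factor-Theorem-style expansion $g(b) - g(a) \equiv (b-a)g'(a) \bmod (b-a)^2$, and the arithmetic $\f{m}^{2n} \sse \f{m}^{n+1}$ for $n \geq 1$) is routine. I would state the expansion lemma explicitly — for $g(x) = \sum_j g_j x^j$, writing $x^j = \bigl((x - a) + a\bigr)^j$ and collecting terms gives $g(x) = g(a) + g'(a)(x - a) + (x - a)^2 h(x)$ with $h \in R[x]$ — and then simply plug in $x = b$.
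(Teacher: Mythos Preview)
Your proof is correct and the induction skeleton matches the paper's, but your inductive step is organized differently and, arguably, more cleanly. The paper writes $\pi(x) = x^{q} - x - m(x)$ with $m(x) \in \f{m}[x]$, then expands $p_{n+1}(x) - p_n(x) = \pi(c)$ (with $c = p_n(x)$, $a = p_{n-1}(x)$, $b = c - a$) via the binomial theorem on $(a+b)^q$; the cross term $q a^{q-1} b$ is handled using $q \in \f{m}$, the higher terms using $b^2 \in \zf{R,\f{m}^{n+1}}$, and the residual piece $m(a) - m(c)$ is treated separately using that each coefficient of $m$ lies in $\f{m}$ and $(a - c) \mid (a^i - c^i)$. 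Your route replaces this three-part bookkeeping with a single first-order Taylor expansion $\pi(b) - \pi(a) \equiv (b-a)\pi'(a) \pmod{(b-a)^2}$ and the observation $\ol{\pi'}(x) = q x^{q-1} - 1 = -1$ in $\ol{R}[x]$, so $\pi'(a) + 1 \in \f{m}$. This is exactly the standard Hensel/Newton convergence argument (the iteration $x \mapsto x + \pi(x)$ is Newton's step for $\pi$ when $\pi' \equiv -1$), and it absorbs both the $q \in \f{m}$ fact and the $m(x) \in \f{m}[x]$ correction into the single statement $\ol{\pi'} = -1$. The paper's version is more elementary in that it never names the derivative, while yours makes the connection to Hensel lifting transparent.
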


\begin{proof}
  We use induction on $n$.  For the base case ($n = 1$) just note that $p_{1}(x)
  - p_{0}(x) = (\pi(x) + x) - x = \pi(x) \in \zf{R, \f{m}^{1}}$ by
  Lemma~\ref{L:Stuff}.
  
  Now assume the induction hypothesis:  For some $n \geq 1$, $p_{n}(x) - p_{n -
    1}(x) \in \zf{R, \f{m}^{n}}$.  We show that $p_{n + 1}(x) - p_{n}(x) \in
  \zf{R, \f{m}^{n + 1}}$.  Since $\pi(x)$ is a polynomial mapping to $x^{q} - x$
  modulo $\f{m}[x]$, there is some $m(x) \in \f{m}[x]$ such that $\pi(x) = x^{q}
  - x - m(x)$; thus $p_{n}(x)$ may also be viewed as applying $x^{q} - m(x)$,
  $n$ times.  For simplicity of notation, set $c = p_{n}(x)$, $a = p_{n -
    1}(x)$, and $b = c - a$, so that $c = \pi(a) + a = a^{q} - m(a)$.  We have
  \[
  \begin{split}
    p_{n + 1}(x) - p_{n}(x) &= \pi(p_{n}(x)) + p_{n}(x) - p_{n}(x)\\
    &= \pi(c)\\
    &= c^{q} - c - m(c)\\
    &= (a + b)^{q} - c - m(c)\\
    &= a^{q} + q a^{q - 1} b + \binom{q}{2} a^{q - 2} b^{2} + \cdots + b^{q} - c
    - m(c)\\
    &= q a^{q - 1} b + \binom{q}{2} a^{q - 2} b^{2} + \cdots + b^{q} + m(a) -
    m(c)
  \end{split}
  \]
  since $a^{q} - c = m(a)$.  By induction, $b \in \zf{R, \f{m}^{n}}$, and since
  $q \in \f{m}$, we see that the first term is in \zf{R, \f{m}^{n + 1}}.  Since
  $b \in \zf{R, \f{m}^{n}}$ and $n \geq 1$, $b^{2} \in \zf{R, \f{m}^{n + 1}}$,
  which takes care of all but the last two terms:  $m(a) - m(c)$.
  
  Now $m(a) - m(c) = \sum_{i = 1}^{\deg m(x)} m_{i} (a^{i} - c^{i})$, where
  $m_{i}$ is the coefficient of $x^{i}$ in $m(x)$, and is thus in \f{m}.  Since
  $-b = a - c$ is a factor of $a^{i}- c^{i}$ for all positive integers $i$ and
  $-b \in \zf{R, \f{m}^{n}}$, it follows that $m(a) - m(c) \in \zf{R, \f{m}^{n +
      1}}$, as desired.
\end{proof}

The following theorem provides, in particular, a more constructive approach to
the proof of the result in Theorem~\ref{T:EquivalenceOfPi} which states that any
monic polynomial $\pi(x)$ mapping to $x^{q} - x$ is actually a $\pi$-polynomial.
In a ring with $\f{m}^{e} = 0$, it allows discovery of the roots by successively
applying the function $\pi(x) + x$ ($e - 1$ times) to representatives of the
residue classes of \f{m}.  When $\pi(x) = x^{q} - x$, this amounts to
successively taking $q$th powers.  In the case of a finite ring, the resulting
roots of $x^{q} - x$ are called \emph{Teichm{\"u}ller elements} in Jian Jun
Jiang's paper \cite{J}.

\begin{theorem}\label{T:Limit}
  Let $(R, \f{m})$ be a complete Noetherian local ring with finite residue field
  $\ol{R} = \{\ol{c_{1}}, \ldots, \ol{c_{q}}\}$.  Let $\pi(x)$ be any polynomial
  mapping to $x^{q} - x$ in $\ol{R}[x]$ and let $p_{n}(x)$ be the function
  obtained by applying $\pi(x) + x$ successively, $n$ times.  The limit
  $\lim_{n \to \infty} p_{n}(c_{i})$ exists.  Set
  $d_{i} = \lim_{n \to \infty} p_{n}(c_{i})$; then $d_{i}$ is a root of $\pi(x)$
  and $\ol{d_{i}} = \ol{c_{i}}$.  If $\pi(x)$ is monic, then there is a
  factorization
  \[
  \pi(x) = (x - d_{1})(x - d_{2}) \cdots (x - d_{q}),
  \]
  and thus $\pi(x)$ is a $\pi$-polynomial.
\end{theorem}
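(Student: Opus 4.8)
The plan is to read off convergence from Lemma~\ref{L:Cauchy} and then pass to the limit in the recursion $p_{n}(x) = \pi(p_{n-1}(x)) + p_{n-1}(x)$. Fix $i$ and consider the sequence $\{p_{n}(c_{i})\}_{n \geq 0}$ in $R$. By Lemma~\ref{L:Cauchy}, $p_{n}(x) - p_{n-1}(x) \in \zf{R, \f{m}^{n}}$ for every $n \geq 1$, so evaluating at $c_{i}$ gives $p_{n}(c_{i}) - p_{n-1}(c_{i}) \in \f{m}^{n}$. Thus the sequence is Cauchy in the $\f{m}$-adic topology, and since $R$ is complete it converges to an element $d_{i} \in R$; the limit is unique because $R$ is Noetherian local, so $\bigcap_{n} \f{m}^{n} = 0$. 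Since $p_{n}(c_{i}) - c_{i}$ is a sum of terms in $\f{m}$, passing to the limit gives $d_{i} - c_{i} \in \f{m}$, that is, $\ol{d_{i}} = \ol{c_{i}}$.

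Next I would check that $d_{i}$ is a root of $\pi(x)$. The key observation is that polynomial evaluation is continuous for the $\f{m}$-adic topology: writing $\pi(X) - \pi(Y) = (X - Y)\,g(X, Y)$ for a suitable $g \in R[X, Y]$ (the two-variable factor theorem), one gets $\pi(a_{n}) - \pi(a) = (a_{n} - a)\,g(a_{n}, a)$, which tends to $0$ whenever $a_{n} \to a$. Applying this to the defining relation $p_{n+1}(c_{i}) = \pi(p_{n}(c_{i})) + p_{n}(c_{i})$ and letting $n \to \infty$ yields $d_{i} = \pi(d_{i}) + d_{i}$, hence $\pi(d_{i}) = 0$.

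Finally, suppose $\pi(x)$ is monic; since $\ol{\pi}(x) = x^{q} - x$ has degree $q$, a monic $\pi(x)$ must have degree exactly $q$. The images $\ol{d_{1}}, \ldots, \ol{d_{q}}$ coincide with $\ol{c_{1}}, \ldots, \ol{c_{q}}$ and are therefore pairwise distinct in the field $\ol{R}$, so every difference $d_{i} - d_{j}$ with $i \neq j$ lies outside $\f{m}$ and is a unit of $R$. Since each $d_{i}$ is a root of $\pi(x)$, Lemma~\ref{L:Factor} shows $(x - d_{1}) \cdots (x - d_{q})$ divides $\pi(x)$ in $R[x]$; both polynomials are monic of degree $q$, so they are equal. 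As $d_{1}, \ldots, d_{q}$ is then a full set of representatives of the residue classes of $\f{m}$, the factorization $\pi(x) = (x - d_{1}) \cdots (x - d_{q})$ exhibits $\pi(x)$ as a $\pi$-polynomial (alternatively, Theorem~\ref{T:EquivalenceOfPi} applies directly).

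I expect the part needing the most care to be the first two paragraphs: formulating the $\f{m}$-adic convergence cleanly, justifying uniqueness of the limit, and verifying that $\pi$ can be applied termwise and commutes with the limit. The closing factorization step, by contrast, is an immediate application of Lemma~\ref{L:Factor} once distinctness of the residues is in hand.
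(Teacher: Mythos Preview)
Your argument is correct and follows the same route as the paper: Lemma~\ref{L:Cauchy} gives the Cauchy property, continuity of polynomial evaluation lets you pass to the limit in the recursion to obtain $\pi(d_{i}) = 0$, and Lemma~\ref{L:Factor} yields the factorization. If anything, you spell out more than the paper does---the two-variable factor theorem for continuity and the fact that the $d_{i} - d_{j}$ are units---where the paper simply asserts continuity and invokes Lemma~\ref{L:Factor} without further comment.
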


\begin{proof}
  The limit exists since, by Lemma~\ref{L:Cauchy}, the sequence
  $\{p_{n}(c_{i})\}_{n \geq 1}$ is a Cauchy sequence.  For any $r \in R$,
  $\pi(r) + r$ and $r$ are in the same coset of \f{m}, since $\pi(r) =r^{q} - r
  -m(r) \in \f{m}$.  We may apply this fact successively, beginning with $r =
  c_{i}$, to see that each $p_{n}(c_{i})$ is congruent to $c_{i}$ modulo \f{m}.
  Since \f{m} is closed under the \f{m}-adic topology, we conclude that
  $\ol{d_{i}} = \ol{c_{i}}$.

  To see that $\pi(d_{i}) = 0$, use the Cauchy sequence mentioned above and the
  fact that polynomials are continuous under the \f{m}-adic topology:
  \[
  \pi(d_{i}) = \pi(\lim p_{n}(c_{i})) = \lim \pi(p_{n}(c_{i})) = \lim (p_{n +
    1}(c_{i}) - p_{n}(c_{i})) = 0.
  \]

  If $\pi(x)$ is monic then it must have degree $q$; an application of
  Lemma~\ref{L:Factor} completes the proof.
\end{proof}

\begin{example}
  With $R = \ZZ_{125}$, we have $q = 5$ and $e = 3$.  We can choose elements $0,
  1, 2, 3, 4$ to be representatives of the elements of $R/\f{m} = \ZZ_{5}$.  We
  factor the polynomial
  \[
  \pi(x) = x^5 + 5x^4 + 40x^3 + 85x^2 + 24x + 50 = x^5 - x - m(x)
  \]
  where $m(x) = -(5x^{4} + 40x^{3} + 85x^{2} + 25x + 50)$.  Applying $p_{2}(x)$
  to $0, 1, 2, 3, 4$ yields $50, 31, 72, 18, 74$.  According to the theorem,
  $\pi(x)$ factors in $R[x]$ as
  \[
  \pi(x) = (x - 50)(x - 31)(x - 72)(x - 18)(x- 74).
  \]
\end{example}

\bigskip

\noindent {\bf Acknowledgment:} The authors thank Richard Belshoff, Jung-Chen
Liu and Les Reid for helpful discussions and suggestions.

\end{document}